\documentclass[10pt,twoside]{article}
\usepackage{amsmath,amsfonts,amssymb,amsthm,graphicx,amscd}

\newtheorem{theorem}{Theorem}
\newtheorem{lemma}{Lemma}
\newtheorem{definition}{Definition}

\begin{document}

%%%%%%%%%%%%%%%%%%%%%%%%%%%%%%%%%%%%%%%
\title{Hyperbolic space groups and edge conditions for their domains}

\author{Milica Stojanovi\' c
\\ University of Belgrade, Faculty of Organizational Sciences,\\
154 Jove Ili\' ca Street, 11040 Belgrade, Serbia\\ 
E-mail: milica.stojanovic@fon.bg.ac.rs\\
https://orcid.org/0000-0002-2517-2189\\
}
\date{}

\maketitle

\pagestyle{myheadings} \markboth{\rm M. Stojanovi\'{c}}{\rm Hyperbolic space groups and edge conditions of its domain}

\footnote[0]{2010 {\it Mathematics Subject Classifications}.
51M20, 52C22, 20H15, 20F55.} \footnote[0]{{\it Key words and Phrases}. hyperbolic space group; fundamental domain; isometries.}

\begin{abstract}

Looking to the fundamental domains of space groups we can investigate in which space they can be realized. If this space is hyperbolic, then the corresponding space group is also hyperbolic. In addition to the usual methods for investigating space of realization, the symmetries of the fundamental polyhedron can give new restricted conditions, here called edge conditions. 

The aim of the research is to find out in which cases simplicial fundamental domains are hyperbolic with vertices out of the absolute. For this reason, edge conditions for simplicial fundamental domains belonging to Family F12 by the notation of E. Moln\' ar et all in 2006, are considered.  
\end{abstract}

%%%%%%%%%%%%%%%%%%%%%%%%%%%%%%%%%%%%%%%%%%%%%%%%%%%%%%%%%%%%%%%%%%%%%%%

%%%%%%%%%%%%%%%%%%%%%%%%%%%%%%%%%%%%%%%%%%%%%%%%%%%%%%%%%%%%%%%%%%%%%%%

\section{Introduction} \label{sec:1}

%%%%%%%%%%%%%%%%%%%%%%%%%%%%%%%%%%

Hyperbolic space groups are isometry groups that act discontinuously on hyperbolic 3-space with a compact fundamental domain. One way to classify them is to look for the fundamental domains of these groups. Face pairing identifications on a given polyhedron give us generators and relations for the space group by the algorithmically generalized Poincar\'{e} Theorem \cite{BM}. 

The simplest fundamental domains are simplices, as well as their integer parts by inner symmetries, and truncated simplices (trunc-simplices) by polar planes of vertices when they lie out of the absolute. There are 64 combinatorially different face pairings of fundamental simplices \cite{Z}, \cite{MP94}, furthermore 35 solid transitive non-fundamental simplex identifications \cite{MP94}. I. K. Zhuk \cite{Z} classified Euclidean and hyperbolic fundamental simplices of finite volume up to congruence. The algorithmic procedure for classification was given by E. Moln\'{a}r and I. Prok \cite{MP88}, while in \cite{MP94}, \cite{MPS97} and \cite{MPS06} the authors summarized all these results, arranging the identified simplices into 32 families. Each of them is characterized by the so-called maximal series of simplex tilings. Besides spherical, Euclidean and hyperbolic realizations there exist also other metric realizations in 3-dimensional simply connected homogeneous Riemannian spaces, moreover, metrically non-realizable topological simplex tilings occur as well \cite{M97}. Some complete cases of supergroups with fundamental trunc-simplices
are discussed in \cite{M90}, \cite{S93}, \cite{S95}, \cite{S97}, \cite{S11}. As a classification, in \cite{S19}, supergroups of groups with fundamental trunc-simplices are given, while in \cite{MSS23} supergroups for groups with fundamental domains that are integer parts of trunc-simplices belonging to families F1 - F4 are considered.

Each identified simplex, considered in this paper, has two equivalence classes for edges, each with 3 edges and one for vertices. We obtain such a division into equivalence classes in 4 cases of face pairings. According to the notation in \cite{MPS06}, these are simplices $T_{19}$, $T_{46}$, $T_{59}$ in the family F12 and simplex $T_{31}$ in the family F27. In \cite{MPS06}, the space of realization of these simplices is also considered using the conditions provided by the Coxeter-Schl\"{a}fli matrix (\cite{V93}). The corresponding groups with fundamental trunc-simplexes are given in \cite{S11}. Here we additionally take into account the conditions obtained by the symmetries of the fundamental simplices, which we call edge conditions here. After examining similar conditions in \cite{S93}, \cite{S97}, cases of simplices realized in the spaces $Nil$ and $\tilde{SL_2} (\mathbb{R})$ were discovered.

This problem was published in 1996, in the author's PhD Thesis \cite{S95}, but in Serbian. As this research becomes more and more relevant, because research on hyperbolic space groups has a newer applications in today's material sciences (e.g. fullerenes, nanomaterials), we repeat it here in an adapted and improved way. Extremal problems related to hyperbolic groups as optimal ball packing and covering, (e.g. \cite{ESz23}, \cite{MSS23} \cite{MS24}, \cite{Sz17}) select these infinite series to choose models for optimal real materials.

In Section \ref{sec:2}, the projective metric is introduced. In Subsection \ref{sec:3-1} the necessary conditions for simplices are given, while in Subsection \ref{sec:3-2} the main Theorem \ref{th:1} is formulated. The proof of Theorem \ref{th:1} is given in Section \ref{sec:4} through Lemmas \ref{l:2} - \ref{l:8}.

%%%%%%%%%%%%%%%%%%%%%%%%%%%%%%%

\section{Projective metric, spherical and hyperbolic space}
\label{sec:2}

Consider the 4-dimensional real vector space $V^4$ and its dual space $\mathcal{V}^*_4$ of linear forms. The projective 3-space $P^3(V^4, \mathcal{V}^*_4)$ can be introduced in the usual way. The 1-dimensional subspaces of $V^4$ (or 3-subspaces of $\mathcal{V}^*_4$) represent the points of $P^3$, while the 1-subspaces of $\mathcal{V}^*_4$ (or the 3-subspaces of $V^4$) represent the planes of $P^3$. The point $X(x)$ and the plane $\alpha (a)$ are incident iff $xa = 0$, i.e. the value of the linear form $a$ on the vector $x$ is equal to zero $(x \in V^4 \setminus \{ 0 \} , a \in \mathcal{V}^*_4 \setminus \{ 0 \} )$. The straight lines of $P^3$ are characterized by 2-subspaces of $V^4$ or of $\mathcal{V}^*_4$, respectively. If $\{ e_i \}$ is a basis for $V^4$ and $\{ e^j \}$ is its dual basis on $\mathcal{V}^*_4$, i.e. $e_i e^j = \delta _i^j$ (the Kroneker symbol), then the form $a = e^j a_j$ takes the value $x a = x^i a_i$ on the vector $x = x^i e_i$. We use the summation convention for the same upper and lower indices.

We can introduce projective metric in $P^3$ by giving a bilinear form 
$$\left\langle ; \right\rangle : \mathcal{V}^*_4 \times \mathcal{V}^*_4 \rightarrow \mathbb{R}, \ \ 
\left\langle b^i u_i; b^j v_j \right\rangle = u_i b^{ij} v_j$$
where $\left( \left\langle b^i; b^j \right\rangle \right) = \left( b^{ij} \right) = \mathbb{B}$ is the Coxeter-Schl\"{a}fli matrix, and the basis $\left\{ b^i \right\}$ in $\mathcal{V}^*_4$ represents the planes containing the simplex faces opposite to the vertices $A_i$, respectively. The vectors $a_j$ of the dual basis $\left\{ a_j \right\}$ in $V^4$, defined by $a_j b^i = \delta _j^i$, represent the vertices $A_j$ of the simplex. The induced bilinear form
$$\left\langle ; \right\rangle : V^4 \times V^4 \rightarrow \mathbb{R}, \ \ 
\left\langle x^i a_i; y^j a_j \right\rangle = x^i a_{ij} y^j$$
is defined by the matrix $\left( \left\langle a_i; a_j \right\rangle \right) = \left( a_{ij} \right) = \mathbb{A}$ inverse to $\mathbb{B}$.

We assume that the bilinear form $\left\langle ; \right\rangle$ is either of the signature $(+, +, +, -)$ which characterizes the hyperbolic metric, or $(+, +, +, +)$ which characterizes the elliptic (spherical) metric. The signature $(+, +, +, 0)$ would describe the Euclidean geometry, which will not appear in our consideration. We can find this signature by finding the eigenvalues of the matrix $\mathbb{B}$.

It is well-known that the bilinear form induces the distance and the angle measure in 3-space. Let $X(x)$ and $Y(y)$ be two points in the projective space $P^3$. Then their distance $d(x,y)$ is determined by
\begin{equation} \label{distance}
\cos \left(d(x,y)\right) = \frac{\left\langle x ; y \right\rangle}{\sqrt{\left\langle x ; x \right\rangle   \left\langle y ; y \right\rangle}} \ \ \mathrm{and} \ \ \cosh \left(d(x,y)\right) = \frac{ - \left\langle x ; y \right\rangle}{\sqrt{\left\langle x ; x \right\rangle   \left\langle y ; y \right\rangle}}
\end{equation}
for the elliptic and hyperbolic cases, respectively. We shall also use the following

\begin{lemma} \label{l:1} For any $(r+1)$-minor determinant of a regular matrix $\left (a_{ij} \right)$ and complementary $\left( n - r \right)$-minor of its inverse $\left (b^{ij} \right)$ holds the following equality
$$
\left|
\begin{array}{ccc}
	a_{i_0 j_0} & \ldots & a_{i_0 j_r} \\
	\ldots &  & \ldots \\
	a_{i_r j_0} & \ldots & a_{i_r j_r}
\end{array}
\right|
= \det (a_{ij})
\left|
\begin{array}{ccc}
	b^{i_{r+1} j_{r+1}} & \ldots & b^{i_{r+1} j_n} \\
	\ldots &  & \ldots \\
	b^{i_n j_{r+1}} & \ldots & b^{i_n j_n}
\end{array}
\right|
\sigma.$$
Here $\sigma = \mathrm{sign} \left( i_0, \ldots , i_r, i_{r+1}, i_n  \right) \cdot \mathrm{sign} \left( j_0, \ldots , j_r, j_{r+1}, j_n  \right)$ denotes the sign product of the corresponding permutations of the elements $0,1,\ldots , n$.
\end{lemma}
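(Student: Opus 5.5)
This is Jacobi's complementary minor theorem, and I will prove it by a block-matrix determinant argument. Write $A=(a_{ij})$ and $B=(b^{ij})=A^{-1}$, both indexed by $0,\ldots,n$, and fix the index sets $I=\{i_0,\ldots,i_r\}$ and $J=\{j_0,\ldots,j_r\}$. Let $I'=\{i_{r+1},\ldots,i_n\}$ and $J'=\{j_{r+1},\ldots,j_n\}$ be their complements.

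First I reduce to the case where both index lists are the identity. Let $P$ and $Q$ be the permutation matrices that reorder rows into the order $i_0,\ldots,i_n$ and columns into the order $j_0,\ldots,j_n$. Set $\tilde A=PAQ$. Then $\tilde A^{-1}=Q^{-1}BP^{-1}$, which is $B$ with its rows reordered by $(j)$ and its columns reordered by $(i)$. Also $\det\tilde A=\det A\cdot \mathrm{sign}(i)\,\mathrm{sign}(j)=\sigma\det A$. In $\tilde A$ the $(r+1)$-minor of the statement is the leading principal block. In $\tilde A^{-1}$ the lower-right $(n-r)\times(n-r)$ block has entries $b^{j_k i_l}$ for $k,l\ge r+1$. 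This is the transpose-indexed version of the stated minor. I will check the index convention so that it matches $b^{i_k j_l}$ as written; if needed, apply the argument to $A^{T}$, whose inverse is $B^{T}$ and whose minors have the same determinants.

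In the reduced case, write $\tilde A=\begin{pmatrix}A_{11}&A_{12}\\A_{21}&A_{22}\end{pmatrix}$ and $\tilde A^{-1}=\begin{pmatrix}B_{11}&B_{12}\\B_{21}&B_{22}\end{pmatrix}$, with $A_{11}$ of size $(r+1)$ and $B_{22}$ of size $(n-r)$. The key step is the identity
$$\tilde A\begin{pmatrix}I_{r+1}&B_{12}\\0&B_{22}\end{pmatrix}=\begin{pmatrix}A_{11}&0\\A_{21}&I_{n-r}\end{pmatrix},$$
which follows from $\tilde A\tilde A^{-1}=I$ read off in the second block column. Taking determinants gives $\det\tilde A\cdot\det B_{22}=\det A_{11}$. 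Substituting $\det\tilde A=\sigma\det A$ and using $\sigma^2=1$ yields exactly the stated equality.

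The only real obstacle is the bookkeeping in the first step. I need to confirm that the permuted inverse block really is the complementary minor with the indices placed as in the statement. I also need to confirm that the sign produced by the two permutation determinants is $\sigma$ and not something else, such as a factor depending on $r$. The paper's convention, which uses full permutations of $0,\ldots,n$ in both factors, is designed precisely so that no extra sign like $(-1)^{\sum i+\sum j}$ appears. So I would verify this on the small case $n=1$, $r=0$ to fix the orientation.
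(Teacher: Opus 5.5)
The paper states this lemma without proof (it is Jacobi's classical complementary-minor theorem), so there is no argument of the paper to compare with. Write $M[R,C]$ for the submatrix of $M$ with rows $R$ and columns $C$, taken in the listed orders. Your block identity $\tilde A\left(\begin{smallmatrix} I_{r+1} & B_{12}\\ 0 & B_{22}\end{smallmatrix}\right)=\left(\begin{smallmatrix} A_{11} & 0\\ A_{21} & I_{n-r}\end{smallmatrix}\right)$ and the sign bookkeeping $\det\tilde A=\sigma\det A$ are correct. What they prove, however, is $\det A[I,J]=\sigma\det A\cdot\det B[J',I']$: the complementary minor of the inverse has entries $b^{j_k i_l}$, as you observed.

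The step you postponed, matching this with $b^{i_k j_l}$, is where the proposal breaks, and your remedy does not work. Running the same argument on $A^{T}$ (whose inverse is $B^{T}$) yields $\det A[J,I]=\sigma\det A\cdot\det B[I',J']$. That is the same identity with the roles of $I$ and $J$ exchanged, so transposing never turns $B[J',I']$ into $B[I',J']$ while keeping $A[I,J]$. In fact the test case $n=1$, $r=0$ that you planned to check refutes the literal statement. Take $A=\left(\begin{smallmatrix}1&1\\0&1\end{smallmatrix}\right)$, $B=\left(\begin{smallmatrix}1&-1\\0&1\end{smallmatrix}\right)$, $(i_0,i_1)=(0,1)$, $(j_0,j_1)=(1,0)$, so $\sigma=-1$. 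The left side is $a_{01}=1$, while the right side is $\det A\cdot b^{10}\cdot\sigma=0$.

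The missing ingredient is symmetry. The paper only applies the lemma to $\mathbb{A}=\mathbb{B}^{-1}$, where $\mathbb{B}$ is the symmetric Coxeter--Schl\"{a}fli matrix of a bilinear form. Then $b^{ij}=b^{ji}$, so $B[J',I']=\left(B[I',J']\right)^{T}$ and $\det B[J',I']=\det B[I',J']$, and your argument is complete. You should therefore do one of two things:
\begin{itemize}
\item add the hypothesis that $(a_{ij})$ is symmetric, which is all the paper needs; or
\item state the conclusion with $b^{j_k i_l}$ in place of $b^{i_k j_l}$.
\end{itemize}
For a general regular matrix, the final step you deferred is not merely unverified but false.
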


%%%%%%%%%%%%%%
For hyperbolic simplices it is interesting to investigate the cases when the vertices are proper, or they lie on the absolute, or out of the absolute. Thus, we need  a submatrix $\hat{B}_{ii}$ of $\mathbb{B}$ corresponding to the vertex $A_i$ obtained by excluding the $i$-th row and $i$-th column. Since we are interested here in the vertices $A_i$ out of the absolute, we need the signature $(+, +, -)$ for $\hat{B}_{ii}$.

%%%%%%%%%%%%%%%%%%%%%%%

\section{Necessary conditions for simplices and the main Theorem}
\label{sec:3}

\subsection{Necessary conditions for simplices}
\label{sec:3-1}

Here we consider the space of realization for the simplices $T_{19}$, $T_{46}$, $T_{59}$ in the family F12 and the simplex $T_{31}$ in the family F27, following the notations in \cite{MPS06}. (Fig. \ref{fig:1}).

The Coxeter-Schl\"{a}fli matrix for all simplices considered here is the same

$$\mathbb{B} = \left[
\begin{array}{cccc}
1 & -\cos (\beta _1) & -\cos (\beta _2) & -\cos (\alpha _1) \\
-\cos (\beta _1) & 1 & -\cos (\alpha _1) & -\cos (\alpha _2) \\
-\cos (\beta _2) & -\cos (\alpha _1) & 1 & -\cos (\beta _1) \\
-\cos (\alpha _1) & -\cos (\alpha _2) & -\cos (\beta _1) & 1 
\end{array}
\right], \ \mathrm{where} $$
%\end{eqnarray} 
\begin{eqnarray} \label{AB}
2 \alpha _1 + \alpha _2 = \frac{2 \pi}{a}, \ \ \ 2 \beta _1 + \beta _2 = \frac{2 \pi}{b}.
\end{eqnarray} 

The following applies to parameters $a$ and $b$: for simplex $T_{19}$, $a \in 2 \mathbb{N}$, $b \in 2 \mathbb{N}$; for $T_{46}$, $a \in \mathbb{N}$, $b \in 2 \mathbb{N}$; for $T_{59}$, $a \in \mathbb{N}$, $b \in \mathbb{N}$; for $T_{31}$, $a \in 2 \mathbb{N}$, $b \in 4 \mathbb{N}$. Here we shall consider the general case for parameters $a$ and $b$, when they can take any natural value. Then we shall consider the space in which the general simplex $T$ is realized (as a model for any of the above). Since there is a duality between '$a$ lines' and '$b$ lines', we can only consider the cases $b \geq a$. Also, if $a = b$, the simplex has more symmetries and belongs to Family 1, according to the notation in \cite{MPS06} and will not be considered here.

\begin{figure}[htbp]

	\centering
		\includegraphics[width=0.98\textwidth]{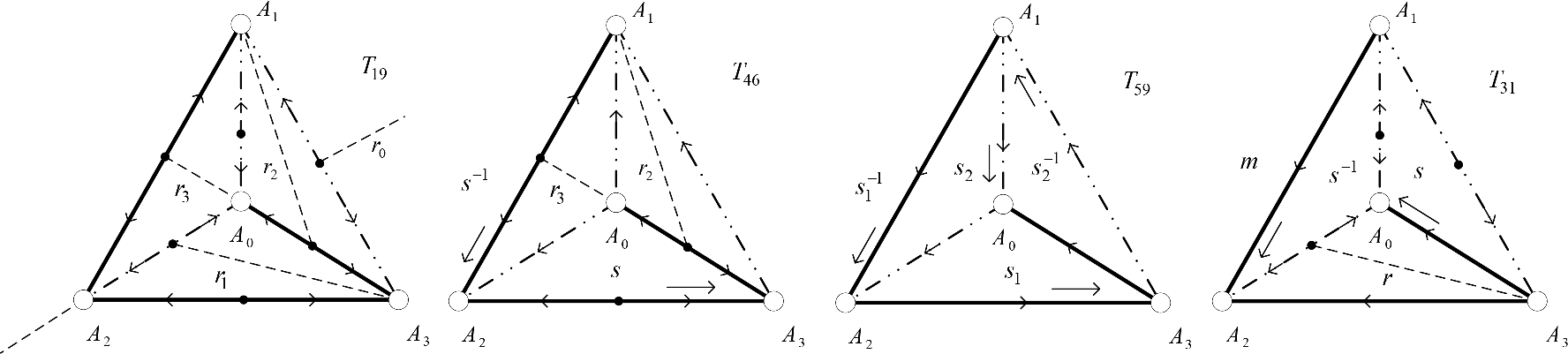}
		\caption{\label{fig:1} \small The simplices $T_{19}$, $T_{46}$, $T_{59}$ and $T_{31}$}
	
\end{figure}

Based on the Coxeter-Schl\"{a}fli matrix, for the general case of the simplex $T$ considered here, it was established in \cite{MPS06} that if $a = 1$, the simplex is realized in $\mathbb{S}^3$, if $(a, b) = (2, 2)$ the simplex is in $\mathbb{H}^3$, with ideal vertices, while in other cases it is hyperbolic with outer vertices. In \cite{Z} was also established that $a = 1$ leads to $\mathbb{S}^3$. Therefore, for our consideration there remain the cases $b > a \geq 2$, where based on previous results (\cite{MPS06}, \cite{Z}, \cite{S95}) we can assume that $\Delta = \det (\mathbb{B}) < 0$ and also $B_{ii} = \det (\hat{B}_{ii}) < 0$, $i = 0, 1, 2, 3$.

Other important properties about the considered simplices are given in \cite{MPS06} and \cite{S11}.

\vspace{5mm}

Here we note that in addition to the previously considered conditions, it should be taken into account that due to the isometries of the identified faces, $d(A_0, A_1) = d(A_0, A_2)$ and $d(A_0, A_3) = d(A_1, A_3)$ hold. Based on of (\ref{distance}) (for the hyperbolic case), using the elements of matrix $\mathbb{A}$ we obtain
$$\frac{a_{01}}{\sqrt{a_{00} a_{11}}} = \frac{a_{02}}{\sqrt{a_{00} a_{22}}}, \ \ \ \frac{a_{03}}{\sqrt{a_{00} a_{33}}} = \frac{a_{13}}{\sqrt{a_{11} a_{33}}}$$
from which it follows, after squaring, that
$$\frac{a_{00} a_{11} - a_{01}^2}{a_{00} a_{11}} = \frac{a_{00} a_{22} - a_{02}^2}{a_{00} a_{22}}, \ \ \ \frac{a_{00} a_{33} - a_{03}^2}{a_{00} a_{33}} = \frac{a_{11} a_{33} - a_{13}^2}{a_{11} a_{33}}.$$

Using Lemma \ref{l:1} we get
$$a_{22} \left( b_{22} b_{33} - b_{23}^2 \right) = a_{11} \left( b_{11} b_{33} - b_{13}^2 \right), \ \ \ a_{11} \left( b_{11} b_{22} - b_{12}^2 \right) = a_{00} \left( b_{00} b_{22} - b_{02}^2 \right),$$
where
\begin{eqnarray*} 
a_{00} = a_{22} & = & \frac{B_{00}}{\Delta } = \frac{1}{\Delta } \left( 1 - \cos^2 \alpha_1 - \cos^2 \alpha_2 - \cos^2 \beta_1 - 2 \cos \alpha_1 \cos \alpha_2 \cos \beta_1  \right) > 0, \\
a_{11} = a_{33} & = & \frac{B_{11}}{\Delta } = \frac{1}{\Delta } \left( 1 - \cos^2 \alpha_1 - \cos^2 \beta_1 - \cos^2 \beta_2 - 2 \cos \alpha_1 \cos \beta_1 \cos \beta_2 \right) > 0.
\end{eqnarray*} 

Therefore, the edge conditions are 
\begin{equation} \label{edge}
f_1 \left( \alpha_1, \beta_1 \right) = 0, \ \ \ f_2 \left( \alpha_1, \beta_1 \right) = 0
\end{equation} 
where
\begin{eqnarray*} 
f_1 & = & \left( 1 - \cos^2 \alpha_1 - \cos^2 \alpha_2 - \cos^2 \beta_1 - 2 \cos \alpha_1 \cos \alpha_2 \cos \beta_1  \right) \sin^2 \beta_1  \\
    & - & \left( 1 - \cos^2 \alpha_1 - \cos^2 \beta_1 - \cos^2 \beta_2 - 2 \cos \alpha_1 \cos \beta_1 \cos \beta_2 \right) \sin^2 \alpha_2 \\
%\end{eqnarray*} 
%  
%\begin{eqnarray*}     
f_2 & = & \left( 1 - \cos^2 \alpha_1 - \cos^2 \beta_1 - \cos^2 \beta_2 - 2 \cos \alpha_1 \cos \beta_1 \cos \beta_2 \right) \sin^2 \alpha_1 \\
    & - &\left( 1 - \cos^2 \alpha_1 - \cos^2 \alpha_2 - \cos^2 \beta_1 - 2 \cos \alpha_1 \cos \alpha_2 \cos \beta_1  \right) \sin^2 \beta_2.
\end{eqnarray*} 

We assume that (\ref{AB}) is satisfied. Moreover, the solution for (\ref{edge}) have to be unique. By (\ref{AB}) the values of the angles have to satisfy $\alpha_1 \in (0, \frac{\pi}{a})$, $\alpha_2 \in (0, \frac{2 \pi}{a})$, $\beta_1 \in (0, \frac{\pi}{b})$, $\beta_2 \in (0, \frac{2 \pi}{b})$, although 
a more precise domain will be given in Section \ref{sec:4}.

Because of the squaring used, we need to check if the signs of $a_{01} = - \frac{B_{01}}{\Delta } $, $a_{02} = \frac{B_{02}}{\Delta }$, $a_{03} = - \frac{B_{03}}{\Delta }$ and $a_{13} = \frac{B_{13}}{\Delta }$ are the same. But it is easy to verify that all these signs are negative, for angles from the mentioned intervals.

%%%%%%%%%%%%%%%%%%%%
\subsection{The main Theorem and the strategy of proof}
\label{sec:3-2}

Consideration of the edge conditions (\ref{AB}), (\ref{edge}) and therefore the result of the following Theorem give the answer to the question of when the simplex $T$ is hyperbolic with vertices out of absolute.

\begin{theorem} \label{th:1}  
When $b > a \geq 2$, the simplex $T$ is hyperbolic with vertices out of absolute iff
\begin{equation} \label{main}
\left( 1 + \cos \frac{\pi}{a} \right) \sin \frac{2 \pi}{b} > \left( \cos \frac{\pi}{a} + \cos \frac{2 \pi}{b} \right) \sin \frac{\pi}{a} .
\end{equation} 
\end{theorem}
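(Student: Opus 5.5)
Throughout, $\alpha_2$ and $\beta_2$ are eliminated by (\ref{AB}): $\alpha_2 = \frac{2\pi}{a} - 2\alpha_1$ and $\beta_2 = \frac{2\pi}{b} - 2\beta_1$. The edge conditions (\ref{edge}) then become a system of two equations in the unknowns $(\alpha_1,\beta_1)$, on the open rectangle $D=(0,\frac{\pi}{a})\times(0,\frac{\pi}{b})$, further restricted by the sign conditions $\Delta<0$, $B_{ii}<0$ and $a_{00},a_{11}>0$. The theorem then amounts to the following claim: this system has a (unique) solution in the admissible region iff inequality (\ref{main}) holds. I would study the two curves $f_1=0$ and $f_2=0$ separately and then analyse how they intersect.

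\textbf{Simplifying the difference.} Subtracting the conditions, $f_1 = 0 = f_2$ gives
$$P\,(\sin^2\beta_1+\sin^2\beta_2) = Q\,(\sin^2\alpha_1+\sin^2\alpha_2),$$
where $P=B_{00}$ and $Q=B_{11}$ (up to the factor $\Delta$). Dividing instead yields
$$\frac{\sin^2\beta_1}{\sin^2\alpha_2}=\frac{\sin^2\alpha_1}{\sin^2\beta_2}=\frac{Q}{P}.$$
Since $Q/P=a_{11}/a_{00}>0$ and all angles lie in $(0,\pi)$, taking square roots gives the clean condition
$$\sin\alpha_1\sin\alpha_2=\sin\beta_1\sin\beta_2,$$
together with one remaining equation, say $f_1=0$. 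So the first step (a lemma) is to show that (\ref{edge}) is equivalent to
$$\sin\beta_1\,\sin\beta_2=\sin\alpha_1\,\sin\alpha_2 \quad\text{and}\quad \sin\beta_1\sqrt{P}=\sin\alpha_2\sqrt{Q},$$
with $P,Q<0$ in the orientation where $\Delta<0$, so that $a_{ii}>0$.

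\textbf{Parametrising the curves.} Next I would parametrise the first curve. With $\alpha_2=\frac{2\pi}{a}-2\alpha_1$ we have $\sin\alpha_1\sin\alpha_2=\frac12\left(\cos(3\alpha_1-\frac{2\pi}{a})-\cos\frac{2\pi}{a}\right)$, and similarly on the $\beta$ side. The first equation becomes
$$\cos\!\left(3\alpha_1-\tfrac{2\pi}{a}\right)-\cos\tfrac{2\pi}{a}=\cos\!\left(3\beta_1-\tfrac{2\pi}{b}\right)-\cos\tfrac{2\pi}{b}.$$
This defines $\beta_1$ as a monotone function of $\alpha_1$ on suitable subintervals. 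Substituting into the second equation gives a one-variable function $g(\alpha_1)$. I would then show that $g$ is strictly monotone on the admissible interval, which gives uniqueness. By the intermediate value theorem, a zero exists iff $g$ changes sign between the endpoints.

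\textbf{Endpoint analysis.} The endpoints are where the admissible region degenerates. The natural candidate is the boundary at which a vertex moves onto the absolute ($B_{ii}=0$), or at which $\beta_2\to 0$, $\alpha_1\to\frac{\pi}{a}$. Evaluating $g$ there should produce precisely the expression
$$(1+\cos\tfrac{\pi}{a})\sin\tfrac{2\pi}{b}-(\cos\tfrac{\pi}{a}+\cos\tfrac{2\pi}{b})\sin\tfrac{\pi}{a},$$
since the half-angle $\frac{\pi}{a}$ and the full angle $\frac{2\pi}{b}$ appear in (\ref{main}). The sign of $g$ at the other endpoint is fixed and easy to determine (it is the degenerate corner near $\alpha_1=\beta_1=0$). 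In this way (\ref{main}) becomes equivalent to the existence of an admissible solution. Finally I would verify, on the solution, that $\Delta<0$ and $B_{ii}<0$, so the vertices are indeed outside the absolute. Conversely, if (\ref{main}) fails, there is no admissible solution, or the solution violates a sign condition, and the simplex is not hyperbolic with outer vertices.

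\textbf{Main obstacle.} I expect the hardest step to be the monotonicity of $g$, together with pinning down exactly which boundary of the admissible region is the active endpoint. The trigonometric expressions are messy, so I would first try working in the variables $x=\cos(3\alpha_1-\frac{2\pi}{a})$ and $y=\cos(3\beta_1-\frac{2\pi}{b})$, or using tangent half-angle substitutions. I would check sign patterns of the derivative case by case ($a=2$ separately from $a\geq 3$), splitting into the lemmas announced for Section~\ref{sec:4}.
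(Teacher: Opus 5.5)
Your reduction step has an algebra error, and the rest of the plan is built on it. With $P=B_{00}$, $Q=B_{11}$ you have $f_1=P\sin^2\beta_1-Q\sin^2\alpha_2$ and $f_2=Q\sin^2\alpha_1-P\sin^2\beta_2$. So (\ref{edge}) gives $\sin^2\beta_1/\sin^2\alpha_2=Q/P$ \emph{and} $\sin^2\beta_2/\sin^2\alpha_1=Q/P$; you inverted the second ratio. The correct consequence is $\sin\alpha_1\sin\beta_1=\sin\alpha_2\sin\beta_2$, which is the relation $\frac{\sin\alpha_2}{\sin\alpha_1}=\frac{\sin\beta_1}{\sin\beta_2}$ used in the proof of Lemma~\ref{l:2}. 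It is not $\sin\alpha_1\sin\alpha_2=\sin\beta_1\sin\beta_2$. The Table confirms this: for $(a,b)=(2,4)$ one finds $\sin\alpha_1\sin\beta_1\approx\sin\alpha_2\sin\beta_2\approx0.344$, whereas $\sin\alpha_1\sin\alpha_2\approx0.446$ and $\sin\beta_1\sin\beta_2\approx0.265$. Your product-to-sum formula is also off: $\sin\alpha_1\sin\alpha_2=\frac12[\cos(3\alpha_1-\frac{2\pi}{a})-\cos(\frac{2\pi}{a}-\alpha_1)]$. So the separated-variables curve you parametrise does not contain the solutions, and $g$, its monotonicity and its endpoint values all concern the wrong system. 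The correct relation does not separate the variables. For $b\geq4$ it still defines, for each $\alpha_1\in(0,\frac{\pi}{a})$, a unique $\beta_1=\phi(\alpha_1)\in(0,\frac{\pi}{b})$; this curve ends at the corner $(\frac{\pi}{a},0)$, and on it $f_1=-\frac{\sin^2\alpha_2}{\sin^2\alpha_1}\,f_2$. The problem therefore becomes the sign pattern of $f_2$ along this curve.

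Even after this repair, the endpoint analysis is not as you describe. The relevant endpoint is not $\beta_2\to0$: (\ref{main}) is the condition $f_2(\frac{\pi}{a},0)>0$, i.e.\ the corner $\alpha_1=\frac{\pi}{a}$, $\beta_1=0$, so $\alpha_2=0$ and $\beta_2=\frac{2\pi}{b}$.

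For $a\geq3$ the other end is $(0,\frac{\pi}{b})$, where $f_2=0$. The sign nearby therefore needs a first-order argument; it comes out as the sign of $-f_1(0,\frac{\pi}{b})<0$.

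For $a=2$ the other end is an interior point of the side $\alpha_1=0$, where $f_2>0$ by (\ref{7}). The parity of sign changes is then the opposite of what your endpoint argument needs. When (\ref{main}) fails, the curve still carries a zero. When (\ref{main}) holds, a transversal proper zero forces a second one. These are improper solutions of the kind excluded in Lemma~\ref{l:2}; for $(a,b)=(2,3)$ the paper reports two solutions, only one proper. The paper removes them by the angle sum of the vertex figure at $A_0$, not by $\Delta<0$, $B_{ii}<0$, and your plan has no step that does this.

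Finally, the intermediate value theorem yields only existence. The converse direction, including the equality case where $(\frac{\pi}{a},0)$ itself solves (\ref{edge}) improperly (cf.\ Lemma~\ref{l:6}), and uniqueness both rest entirely on the monotonicity of $g$ that you leave open. That is precisely the work the paper does through:
\begin{itemize}
\item monotonicity of $f_1$ in $\alpha_1$ and of $f_2$ in $\beta_1$ (Lemmas~\ref{l:3}, \ref{l:4});
\item diagonal separation (Lemmas~\ref{l:5}, \ref{l:6});
\item contraction mappings (Lemmas~\ref{l:7}, \ref{l:8}).
\end{itemize}
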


Note that the condition  (\ref{main}) is equivalent to condition $f_2 \left( \frac{\pi}{a}, 0 \right) > 0$, applying (\ref{AB}) to $\alpha_2$ and $\beta_2$. Also, note that in (\ref{main}) for fixed $a$, and increasing $b$, $\cos \frac{2 \pi}{b}$ increases and $\sin \frac{2 \pi}{b}$ decreases. Thus, the left side of the inequality decreases, and the right side increases. This means that for fixed $a$ there exists a corresponding $b_{max}$ such that the inequality (\ref{main}) is satisfied only for values $b \leq b_{max}$. We can see that this inequality is always satisfied for $b = 2a$, because then it becomes
$$\left( 1 + \cos \frac{\pi}{a} \right) \sin \frac{\pi}{a} > 2 \cos \frac{\pi}{a} \sin \frac{\pi}{a}. $$
Also, when $a \rightarrow \infty$, then $\cos \frac{\pi}{a} \rightarrow 1$, so for a sufficiently large $a$, $b_{max}$ becomes equal to $2a$. More precisely, by checking the individual values of $a$, the following values of $b_{max}$ were obtained:

%\begin{center}
\begin{tabular}{c|cccccccccccc} 
$a$ & 2 & 3 & 4 & 5 & 6 & 7 & 8 & 9 & 10 & 11 & 12 & \ldots  \\
\hline
$b_{max}$ & 7 & 8 & 9 & 11 & 12 & 14 & 16 & 18 & 20 & 22 & 24 & \ldots 
\end{tabular}

\vspace{5mm}

The case $a =2 $, $b = 3$ is examined by computer in \cite{S95}. It is shown that there are two solutions, but only one is proper. The values of the dihedral angles for the proper solution are included in the Table at the end of the paper. 

For other parameter cases, the contraction mapping theorem on the unique solution will be used in the proof of Theorem \ref{th:1}. So we shall first define

\begin{definition} \label{d:1} 
The function $\bar{g} = (g_1, g_2, \ldots, g_n)$ mapping a $n$-dimensional vector to $n$-dimensional vector is {\em mapping with contraction} if on closed and bounded set $D \subset \mathbb{R}^n$ it is true
\begin{enumerate}
	\item $\bar{x} = (x_1, x_2, \ldots, x_n) \in D \Rightarrow \bar{g} (\bar{x}) \in D$
	\item $\left\| \bar{g} (\bar{x}) - \bar{g} (\bar{y}) \right\| \leq L \left\| \bar{x} - \bar{y} \right\|$, for some $0 < L < 1$ and all $\bar{x}, \bar{y} \in D$.
\end{enumerate}
\end{definition}

The following Theorem then holds.

\begin{theorem} \label{th:2} 
If there exists closed and bounded set $D \subset \mathbb{R}^n$ such that $\bar{g}$ is mapping with contraction on it, then $\bar{x} = \bar{g} (\bar{x})$ has exactly one solution in $D$.
\end{theorem}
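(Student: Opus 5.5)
The plan is the standard Banach fixed-point argument. Since $D\subset\mathbb{R}^n$ is closed (boundedness is not even needed for this part), it is a complete metric space under the norm $\|\cdot\|$ used in Definition \ref{d:1}. By condition 1 of Definition \ref{d:1}, $\bar g$ maps $D$ into itself, so the iteration below stays in $D$. Condition 2 makes $\bar g$ Lipschitz, hence continuous, on $D$.

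\emph{Existence.} Take an arbitrary $\bar x^{(0)}\in D$ and define $\bar x^{(k+1)}=\bar g(\bar x^{(k)})$; all iterates lie in $D$. By induction, condition 2 gives
$$\left\|\bar x^{(k+1)}-\bar x^{(k)}\right\|\le L^k\left\|\bar x^{(1)}-\bar x^{(0)}\right\|.$$
For $m>k$, the triangle inequality and the geometric series give
$$\left\|\bar x^{(m)}-\bar x^{(k)}\right\|\le \frac{L^k}{1-L}\left\|\bar x^{(1)}-\bar x^{(0)}\right\|,$$
which tends to $0$ as $k\to\infty$ because $0<L<1$. Thus the sequence is Cauchy in $\mathbb{R}^n$. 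By completeness of $\mathbb{R}^n$ it converges to some $\bar x^*$, and $\bar x^*\in D$ because $D$ is closed. Passing to the limit in $\bar x^{(k+1)}=\bar g(\bar x^{(k)})$ and using the continuity of $\bar g$ gives $\bar x^*=\bar g(\bar x^*)$.

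\emph{Uniqueness.} Suppose $\bar x,\bar y\in D$ both satisfy $\bar x=\bar g(\bar x)$ and $\bar y=\bar g(\bar y)$. Then
$$\|\bar x-\bar y\|=\|\bar g(\bar x)-\bar g(\bar y)\|\le L\|\bar x-\bar y\|,$$
so $(1-L)\|\bar x-\bar y\|\le 0$. Since $L<1$, this forces $\bar x=\bar y$.

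No step is a serious obstacle. The only point needing care is that the limit $\bar x^*$ stays in $D$, which is exactly where the closedness of $D$ is used, together with the invariance $\bar g(D)\subset D$ that keeps every iterate in $D$. In the later application to Theorem \ref{th:1}, the real difficulty will instead be to choose $D$ and rewrite the edge conditions (\ref{edge}) as a fixed-point equation for $(\alpha_1,\beta_1)$ so that both conditions of Definition \ref{d:1} can actually be verified.
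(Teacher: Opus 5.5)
Your argument is correct. It is the standard Banach fixed-point proof. The paper gives no proof of its own and simply quotes Theorem~\ref{th:2} as the classical contraction mapping principle, so your route is exactly the one it implicitly relies on. The only tacit hypothesis, which the paper's statement shares, is $D\neq\emptyset$: you need it when you choose $\bar{x}^{(0)}\in D$, and for $D=\emptyset$ the conditions of Definition~\ref{d:1} hold vacuously while there is no fixed point.
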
 

It will be used the norm 
$\left\| \bar{x} \right\| = \max  \left| x_i \right|$, $1 \leq i \leq n$, 
and also the next Theorem. 

\begin{theorem} \label{th:3}
If the partial derivatives $\frac{\partial g_i}{\partial x_j}$ ($1 \leq i, j \leq n$) are continuous functions in closed and bounded set $D$, then for matrix $G = \left[ g_{ij} \right]_{n \times n}$ where $g_{ij}$ is supremum of the partial derivatives when $\bar{x} \in D$,
%$$g_{ij} = \sup_{\bar{x} \in D} \left| \frac{\partial g_i (\bar{x})}{\partial x_j} \right| $$
is true $\left\| \bar{g} (\bar{x}) - \bar{g} (\bar{y}) \right\| \leq \left\| G \right\| \left\| \bar{x} - \bar{y} \right\|$.
\end{theorem} 

%%%%%%%%%%%%%%%%%%%%%%%%%%%%
\section{The properties of functions $f_1$, $f_2$ and proof of the main Theorem %\ref{th:1}
}
\label{sec:4}

\subsection{First observations}

Here the first observations necessary for the proof will be given.

When $a = 2$, $b \geq 4$ it is fulfilled
\begin{equation} \label{5}
 \begin{array}{lcl}
f_1 \left( 0, \beta_1 \right) & = & -\left( 1 - \cos \beta_1 \right)^2 \sin ^2 \beta_1  \leq 0, \\
f_1 \left( \frac{\pi}{2}, \beta_1 \right) & = & - \cos ^2 \beta_1 \sin ^2 \beta_1 \leq 0, \\
f_1 \left( \alpha_1, 0 \right) & = & \left( \cos \alpha_1 + \cos \beta_2 \right)^2 \sin ^2 \alpha_2 \geq 0, \\
f_1 \left( \frac{\pi}{3}, \beta_1 \right) & = & - \frac{1}{16} + \frac{1}{4} \cos ^2 \beta_1 + \frac{3}{4} \cos ^2 \beta_2 + \frac{3}{4} \cos \beta_1 \cos \beta_2 - \\
  & - & \frac{1}{4} \sin ^2 2 \beta_1 - \frac{1}{2} \sin 2 \beta_1 \sin \beta_1 > 0.
 \end{array}
\end{equation}
In the last relation we estimate: if $\beta_2 \leq \frac{\pi}{3}$, then $\cos \beta_1 \geq \frac{\sqrt{2}}{2}$, $\cos \beta_2 \geq \frac{1}{2}$, $\sin \beta_1 \leq \frac{\sqrt{2}}{2}$, $\sin 2 \beta_1 \leq 1$ and so $f_1 \left( \frac{\pi}{3}, \beta_1 \right) \geq \frac{\sqrt{2}}{16} > 0$. When $b = 4, 5$ if $\beta_2 > \frac{\pi}{3}$, then $\beta_1 < \frac{\pi}{12}$, so again $f_1 \left( \frac{\pi}{3}, \beta_1 \right) > 0$. 

If $b > a \geq 3$ then 
\begin{equation} \label{6}
 \begin{array}{lcl}
f_1 \left( 0, \beta_1 \right) & = & -\left( \cos \frac{2 \pi}{a} + \cos \beta_1 \right)^2 \sin ^2 \beta_1 + \left( \cos \beta_1 + \cos \beta_2 \right)^2 \sin ^2 \frac{2 \pi}{a}
> 0, \\
f_1 \left( \frac{\pi}{a}, \beta_1 \right) & = & -\left( \cos \frac{\pi}{a} + \cos \beta_1 \right)^2 \sin ^2 \beta_1 \leq 0, \\
f_1 \left( \alpha_1, 0 \right) & = & \left( \cos \alpha_1 + \cos \frac{2 \pi}{b} \right)^2 \sin ^2 \alpha_2 \geq 0. 
   \end{array}
\end{equation}
The first relation is true because $\beta_1 \leq \frac{\pi}{b} < \frac{2 \pi}{a}$ and $\beta_2 \leq \frac{2 \pi}{b} < \frac{2 \pi}{a}$.

Also for $a \geq 2$, $b \geq 4$ it is true 
\begin{equation} \label{7}
 \begin{array}{lcl}
f_2 \left( 0, \beta_1 \right) & = & \left( \cos \frac{2 \pi}{a} + \cos \beta_1 \right)^2 \sin ^2 \beta_2 \geq 0, \\
f_2 \left( \alpha_1, \frac{\pi}{b} \right) & = & - \left( \cos \alpha_1 + \cos \frac{\pi}{b} \right)^2 \sin ^2 \alpha_1 \leq 0.
 \end{array}
\end{equation}

We shall also use partial derivatives $d_1$ of $f_1$ over $\alpha_1$ and $d_2$ of $f_2$ over $\alpha_2$, but taking into account (\ref{AB}). So, 
\begin{equation} \label{8}
 \begin{array}{lcl}
 d_1 & = & \frac{\partial f_1}{\partial \alpha_1} - 2 \frac{\partial f_1}{\partial \alpha_2} = \left[ \left( 2 \sin \alpha_1 \cos \alpha_1 + 2 \sin \alpha_1 \cos \alpha_2 \cos \beta_1 \right) \sin^2 \beta_1 \right. - \\
     & & \left.- \left( 2 \sin \alpha_1 \cos \alpha_1 + 2 \sin \alpha_1 \cos \beta_1 \cos \beta_2 \right) \sin^2 \alpha_2 \right] - 2 \left[ \left( 2 \sin \alpha_2 \cos \alpha_2 +    \right.\right.   \\
   & & + \left. 2 \sin \alpha_2 \cos \alpha_1 \cos \beta_1 \right) \sin^2 \beta_1 - 2 \sin \alpha_2 \cos \alpha_2 \left( 1 - \cos^2 \alpha_1 - \cos^2 \beta_1 - \right.\\
   & & \left.\left. - \cos^2 \beta_2 - 2 \cos \alpha_1 \cos \beta_1 \cos \beta_2 \right) \right] \\
 d_2 & = & \frac{\partial f_2}{\partial \beta_1} - 2 \frac{\partial f_2}{\partial \beta_2} = \left[ \left( 2 \sin \beta_1 \cos \beta_1 + 2 \sin \beta_1 \cos \alpha_1 \cos \beta_2 \right) \sin^2 \alpha_1 \right. - \\
     & & \left.- \left( 2 \sin \beta_1 \cos \beta_1 + 2 \sin \beta_1 \cos \alpha_1 \cos \alpha_2 \right) \sin^2 \beta_2 \right] - 2 \left[ \left( 2 \sin \beta_2 \cos \beta_2 +    \right.\right.   \\
   & & + \left. 2 \sin \beta_2 \cos \alpha_1 \cos \beta_1 \right) \sin^2 \alpha_1 - 2 \sin \beta_2 \cos \beta_2 \left( 1 - \cos^2 \alpha_1 - \cos^2 \alpha_2 - \right.\\
   & & \left.\left. - \cos^2 \beta_1 - 2 \cos \alpha_1 \cos \alpha_2 \cos \beta_1 \right) \right] .
 \end{array}
\end{equation}

\subsection{The domain $D$ of considerations}

As domain mentioned in Subsection \ref{sec:3-1} is not suitable for investigation for cases $a = 2$, $a = 3$, we shall make restriction to domain

\begin{equation} \label{9} D = \left\{
\begin{array}{l}
\left\{ \left( \alpha_1, \beta_1 \right): \alpha_1 \in \left[ \frac{\pi}{3}, \frac{\pi}{2} \right], \beta_1 \in \left[ 0, \frac{\pi}{b} \right]  \right\}, a = 2, b \geq 4, \\
\left\{ \left( \alpha_1, \beta_1 \right): \alpha_1 \in \left[ \frac{\pi}{12}, \frac{\pi}{3} \right], \beta_1 \in \left[ 0, \frac{\pi}{b} \right]  \right\}, b > a = 3,  \\
\left\{ \left( \alpha_1, \beta_1 \right): \alpha_1 \in \left[ 0, \frac{\pi}{a} \right], \beta_1 \in \left[ 0, \frac{\pi}{b} \right]  \right\}, b > a \geq 4.
\end{array}
\right.
\end{equation} 

That is the reason to prove the next

\begin{lemma} \label{l:2} The system (\ref{AB}), (\ref{edge})

\begin{enumerate}
	\item hasn't proper solution if $\alpha_1 \in \left[ 0, \frac{\pi}{3} \right]$ for $a = 2$, $b \geq 4$. 
	\item hasn't solution if $\alpha_1 \in \left[ 0, \frac{\pi}{12} \right]$ for $a = 3$, $b \geq 4$.
\end{enumerate}
\end{lemma}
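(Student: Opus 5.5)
We have to show that, in the indicated strips, the system (\ref{AB}), (\ref{edge}) has no (proper) solution. In each strip we will exhibit one of the two edge functions as having a fixed sign, except possibly at boundary points that are not proper simplices. Throughout we eliminate $\alpha_2=\frac{2\pi}{a}-2\alpha_1$ and $\beta_2=\frac{2\pi}{b}-2\beta_1$. Then $f_1,f_2$ are functions of $(\alpha_1,\beta_1)$ on the rectangle of the statement, with $\beta_1\in[0,\frac{\pi}{b}]$.

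\emph{Case $a=2$, $\alpha_1\in[0,\frac{\pi}{3}]$.} Here $\alpha_2=\pi-2\alpha_1$. We work with $f_1$ and use the boundary values (\ref{5}): $f_1(0,\beta_1)\le0$ and $f_1(\frac{\pi}{3},\beta_1)>0$. This alone does not exclude zeros inside, so we prove monotonicity in $\alpha_1$: we show that the derivative $d_1$ from (\ref{8}) is positive on $(0,\frac{\pi}{3}]\times[0,\frac{\pi}{b}]$. Then for each fixed $\beta_1$, $f_1$ is strictly increasing in $\alpha_1$ and has at most one zero.

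That alone is not enough, so we combine it with $f_2$. Substituting $\cos\alpha_2=-\cos2\alpha_1$ and $\sin\alpha_2=\sin2\alpha_1$, we show $f_2>0$ on the region where $f_1\ge 0$ and $\alpha_1\le\frac{\pi}{3}$. Alternatively, and this is the route we would try first, we show directly that $f_2>0$ for $\alpha_1\in(0,\frac{\pi}{3}]$, $\beta_1\in[0,\frac{\pi}{b}]$. We factor $f_2$ as $B_{11}\sin^2\alpha_1-B_{00}\sin^2\beta_2$ and bound each term. For $\alpha_1\le\frac{\pi}{3}$ we have $\alpha_2\ge\frac{\pi}{3}$, so $B_{00}$ is small in absolute value, while $B_{11}$ is bounded away from $0$ because $\beta_1,\beta_2\le\frac{\pi}{2}$ for $b\ge4$. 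The only possible common zeros then lie on $\alpha_1=0$ or $\beta_1=0$, i.e.\ on degenerate angles, and these are not proper solutions. This explains the word ``proper'' in the statement: at $\alpha_1=0$, $\beta_1$ small, $f_1$ and $f_2$ may vanish simultaneously, but such a point does not define a simplex.

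\emph{Case $a=3$, $\alpha_1\in[0,\frac{\pi}{12}]$.} Here $\alpha_2=\frac{2\pi}{3}-2\alpha_1\in[\frac{\pi}{2},\frac{2\pi}{3}]$, so $\cos\alpha_2\in[-\frac12,0]$ and $\sin\alpha_2\ge\frac{\sqrt3}{2}$. We show $f_1>0$ strictly on the whole strip, using the first line of (\ref{6}) as the model. Write $f_1=B_{00}\sin^2\beta_1-B_{11}\sin^2\alpha_2$, with $B_{00},B_{11}<0$. The term $-B_{11}\sin^2\alpha_2\ge\frac34(-B_{11})$ dominates, since $\sin^2\beta_1\le\sin^2\frac{\pi}{4}=\frac12$ and $|B_{00}|$ is controlled. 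We then do explicit estimates, as in the proof of the last line of (\ref{5}). For $b\ge6$ we use $\cos\beta_1\ge\cos\frac{\pi}{6}$, $\cos\beta_2\ge\frac12$ and $\cos\alpha_1\ge\cos\frac{\pi}{12}$, which give $f_1\ge c>0$. The cases $b=4,5$ are checked separately with the sharper ranges of $\beta_1,\beta_2$. Hence $f_1\neq0$ and there is no solution.

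The main obstacle is the $a=2$ case. Boundary signs alone are insufficient there, and the degenerate corner $\alpha_1=0$, where $f_1(0,0)=0$, must be separated from genuine solutions. We expect to handle this by a careful sign analysis of $d_1$, or of $f_2$, near that corner using Taylor expansion in $(\alpha_1,\beta_1)$, together with crude trigonometric bounds elsewhere.
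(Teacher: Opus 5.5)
Your case $a=3$ is sound. Like the paper, you show $f_1>0$ on the whole strip; only the bookkeeping differs. The paper regroups $f_1$ into three individually sign-definite terms. Your comparison of $B_{00}\sin^2\beta_1$ with $B_{11}\sin^2\alpha_2$ also closes, even uniformly in $b\ge4$: $-B_{00}\sin^2\beta_1\le\cos^2\beta_1\sin^2\beta_1\le\frac14$, while $-B_{11}\sin^2\alpha_2\ge\frac34\cos^2\frac{\pi}{12}$.

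The case $a=2$, however, has a genuine gap, because both sign claims you propose are false.
\begin{itemize}
\item The direct claim $f_2>0$ on $(0,\frac{\pi}{3}]\times[0,\frac{\pi}{b}]$ contradicts the second line of (\ref{7}): $f_2(\alpha_1,\frac{\pi}{b})=-(\cos\alpha_1+\cos\frac{\pi}{b})^2\sin^2\alpha_1<0$. Your own heuristic ($B_{11}<0$ bounded away from $0$, $B_{00}$ small) in fact gives $f_2\approx B_{11}\sin^2\alpha_1<0$, so there is a sign slip.
\item The restricted version ($f_2>0$ where $f_1\ge0$) fails as well: at $(\frac{\pi}{4},\frac{\pi}{b})$ one has $f_1>0>f_2$.
\item The monotonicity claim $d_1>0$ is false. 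Along $\beta_1=0$, $f_1=(\cos\alpha_1+\cos\frac{2\pi}{b})^2\sin^2 2\alpha_1$ is decreasing on $[\frac{\pi}{4},\frac{\pi}{3}]$. The paper itself records $d_1<0$ at $\alpha_1=\frac{\pi}{3}$.
\end{itemize}

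More fundamentally, the conclusion you aim for is false: common zeros in this strip do not lie only on $\alpha_1=0$ or $\beta_1=0$. So no sign, monotonicity or Taylor argument can prove part 1. The corner expansion you propose actually exhibits a solution. For small angles with $\alpha_1\ll\beta_1$ one has $B_{00}\approx-\frac14\beta_1^4$ and $B_{11}\approx-4$, hence
\[
f_1\approx16\alpha_1^2-\tfrac14\beta_1^6,\qquad f_2\approx\tfrac14\beta_1^4\beta_2^2-4\alpha_1^2 .
\]
The zero curves $\alpha_1\approx\beta_1^3/8$ and $\alpha_1\approx\beta_1^2\beta_2/4$ cross transversally where $\beta_1\approx2\beta_2$, i.e. $\beta_1\approx\frac{4\pi}{5b}$. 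For instance, for $b=10$ there is a common zero with $\alpha_1\approx0.002$, $\beta_1\approx0.25$, $\beta_2\approx0.125$. All four dihedral angles lie in $(0,\pi)$, so this is a nondegenerate solution.

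This is exactly why the lemma says ``proper'': such a solution has to be excluded geometrically, not by a sign count. The paper proceeds as follows.
\begin{enumerate}
\item Dividing the two edge equations gives $\frac{\sin\alpha_2}{\sin\alpha_1}=\frac{\sin\beta_1}{\sin\beta_2}$, i.e. $\cos\alpha_1=\frac{\sin\beta_1}{2\sin\beta_2}$. Hence $\alpha_1<\frac{\pi}{3}$ forces $\beta_1>\beta_2$.
\item On the line $2\alpha_1=\beta_1$ this gives $f_1=(B_{00}-B_{11})\sin^2\beta_1>0$, since $\cos\beta_2>|\cos\alpha_2|$. Together with $f_1(0,\beta_1)<0$, a solution must have $2\alpha_1<\beta_1$.
\item For a proper simplex the vertex $A_0$ is outer. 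Its polar plane cuts out a hyperbolic triangle with angles $\alpha_1,\alpha_2,\beta_1$. Its angle sum would exceed $3\alpha_1+\alpha_2=\pi+\alpha_1>\pi$, a contradiction.
\end{enumerate}
This geometric vertex-figure step is the idea missing from your proposal.
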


\begin{proof}

{\bf 1.} If the solution exists for $a = 2$, $b \geq 4$ and $\alpha_1 \in \left[ 0, \frac{\pi}{3} \right]$, then $\alpha_2 \in \left[ \frac{\pi}{3}, \pi \right]$. Let us proof that $2 \alpha_1 < \beta_1 $.

From (\ref{edge}) follows $\frac{\sin \alpha_2}{\sin \alpha_1} = \frac{\sin \beta_1}{\sin \beta_2}$. If valid $a = 2$ then
$$\cos \alpha_1 = \frac{\sin \beta_1}{2 \sin \beta_2},$$ 
because $\sin \alpha_2 = \sin 2 \alpha_1$, $\cos \alpha_2 = - \cos 2 \alpha_1$.
That means, if $\alpha_1 < \frac{\pi}{3}$, then $\sin \beta_1 \geq \sin \beta_2$. So, we can assume that $\beta_2 < \frac{2 \pi}{3 b} < \beta_1 < \frac{\pi}{4}$.	
	
We know that $f_1 = B_{00} \sin^2 \beta_1 - B_{11} \sin^2 \alpha_2$ and as it was mentioned in Subsection \ref{sec:3-1}, $B_{00}, B_{11} < 0$. If $2 \alpha_1 = \beta_1$ then it holds $B_{00} > B_{11}$ because $\cos \beta_2 > \left| \cos \alpha_2 \right|$, and $\sin^2 \beta_1 = \sin^2 \alpha_2$. Consequence is that for $2 \alpha_1 = \beta_1$, $f_1 > 0$. 

On the other hand, by (\ref{5}) $f_1 \left( 0, \beta_1 \right) < 0$ ($\beta_1 \neq 0$). So, for solution of the system (\ref{AB}), it have to be valid $2 \alpha_1 < \beta_1 $.

Since $\Delta, B_{00} < 0$, if the solution is proper, the simplex $T$ has to be hyperbolic with vertices out of absolute. Then after cutting the simplex $T$ by the polar plane of the vertex $A_0$ the vertex figure is a hyperbolic triangle with angles equal to the dihedral angles of the edges $A_0 A_1$, $A_0 A_2$, $A_0 A_3$, i.e. the angles $\alpha_1$, $\alpha_2$, $\beta_1$. As $2 \alpha_1 + \alpha_2 = \pi$ and $2 \alpha_1 < \beta_1 $, their sum is greater then $3 \alpha_1 + \alpha_2 > \pi$, what is a contradiction. This means the system (\ref{AB}), (\ref{edge}) have the solution which is not proper.

\vspace{3mm}

{\bf 2.} When $b > a \geq 3$, let us consider $f_1$ written in the form
\begin{eqnarray*} f_1 \left( \alpha_1, \beta_1 \right) & = & \left( 1 - \cos^2 \alpha_1 - \cos^2 \beta_1 \right) \left( \sin^2 \beta_1 - \sin^2 \alpha_2 \right) +\\
   & & + \left( \cos^2 \beta_2 + 2 \cos \alpha_1 \cos \beta_1 \cos \beta_2 \right) \sin^2 \alpha_2 - \\
   & & - \left( \cos^2 \alpha_2 + 2 \cos \alpha_1 \cos \alpha_2 \cos \beta_1  \right) \sin^2 \beta_1.
\end{eqnarray*}

If $\alpha_1 < \frac{\pi}{12}$ then $\alpha_2 \in \left[ \frac{\pi}{2} ,\frac{2 \pi}{3} \right]$. It is also true $\beta_1 \leq \frac{\pi}{4}$ and $\beta_2 \leq \frac{\pi}{2}$. So, $1 - \cos^2 \alpha_1 - \cos^2 \beta_1 < 0$, $\sin^2 \beta_1 < \sin^2 \alpha_2 $, $\cos \alpha_2 < 0$, $\cos \alpha_2 + 2 \cos \alpha_1 \cos \beta_1 > \frac{1}{2} > 0$. This means $f_1 \left( \alpha_1, \beta_1 \right) > 0$ for all $\alpha_1 < \frac{\pi}{12}$ and therefore the system (\ref{AB}), (\ref{edge}) hasn't any solution.
\end{proof}

%%%%%%%%%%%%%%%%%%%%%%%%

\subsection{The properties of $d_1$ and $d_2$}

%%%%%%%%%%%%%%%%%%%%%

We shall first consider some values of derivatives $d_1$ and $d_2$, where we think about the function $d_1$ as a three parameters function with parameters $\alpha_1$, $\alpha_2$, $\beta_1$ (it still holds $\beta_2 = \frac{2 \pi}{b} - 2 \beta_1$, by (\ref{AB})) and about $d_2$ as a function with parameters $\alpha_1$, $\beta_1$, $\beta_2$ ($\alpha_2 = \frac{2 \pi}{a} - 2 \alpha_1$ by (\ref{AB})).

\vspace{3mm}

%{\bf 1.} 
For $d_1$ it is true
$$d_1 \left( \alpha_1, 0, \beta_1 \right) = 2 \left( \sin \alpha_1 \cos \alpha_1 + \sin \alpha_1 \cos \beta_1 \right) \sin^2 \beta_1 \geq 0.$$

On the other hand for $a = 2$, $b \geq 4$ it holds
%%%%%%%   a = 2
$$ \begin{array}{c}
d_1 \left( \alpha_1, \frac{\pi}{3}, \beta_1 \right) = \left( 2 \sin \alpha_1 \cos \alpha_1 + \sin \alpha_1 \cos \beta_1 - \sqrt{3} - 2 \sqrt{3} \cos \alpha_1 \cos \beta_1 \right) \sin^2 \beta_1 - \\
   -  \frac{3}{4} \left( 2 \sin \alpha_1 \cos \alpha_1 + 2 \sin \alpha_1 \cos \beta_1 \cos \beta_2 \right) + \ \ \ \ \ \ \ \   \\
\ \ \ \ \ \ \ \ \ \ \ \ \ \ \ \ \ \ \ \  
   +  \sqrt{3} \left( 1 - \cos^2 \alpha_1 - \cos^2 \beta_1 - \cos^2 \beta_2 - 2 \cos \alpha_1 \cos \beta_1 \cos \beta_2  \right) < 0,
\end{array} $$
because $\sin^2 \beta_1 \leq \frac{3}{4}$, $\sin \alpha_1 \cos \alpha_1 < 1 < \sqrt{3}$. Also from $\beta_2 = \frac{2 \pi}{b} - 2 \beta_1 \leq \frac{\pi}{2} - 2 \beta_1$ follows $\cos \beta_2 \geq \sin 2 \beta_1 \geq \sin \beta_1$ and therefore holds $\cos^2 \beta_1 + \cos^2 \beta_2 \geq \cos^2 \beta_1 + \sin^2 \beta_1 = 1$.

\vspace{3mm}

%%%%%%%%%%%%%%%%%%%%%%%%%%%5

If $b > a = 3$ is true, then $\sin \alpha_1 < 1 < \sqrt{2} \leq 2 \cos \beta_1$, so
%%%%%%%   a = 3 xxxx
$$ \begin{array}{c}
d_1 \left( \alpha_1, \frac{\pi}{2}, \beta_1 \right) = \left( 2 \sin \alpha_1 \cos \alpha_1 - 4 \cos \alpha_1 \cos \beta_1 \right) \sin^2 \beta_1 -  \left( 2 \sin \alpha_1 \cos \alpha_1 + \right. \\
\left. + 2 \sin \alpha_1 \cos \beta_1 \cos \beta_2 \right) < 0.
\end{array} $$ 

\vspace{3mm}

%%%%%%%%%%%%%%%% a=4!!!!!!!!!!!
In the case $b > a \geq 4$ it is valid
$$ \begin{array}{c}
d_1 \left( \alpha_1, \frac{2 \pi}{a}, \beta_1 \right) = \left( 2 \sin \alpha_1 \cos \alpha_1 + 2 \sin \alpha_1 \cos \frac{2 \pi}{a} \cos \beta_1 - 4 \sin \frac{2 \pi}{a} \cos \frac{2 \pi}{a} - \right. \\
\left.  - 4 \sin \frac{2 \pi}{a} \cos \alpha_1 \cos \beta_1  \right) \sin^2 \beta_1 - \left( 2 \sin \alpha_1 \cos \alpha_1 + 2 \sin \alpha_1 \cos \beta_1 \cos \beta_2 \right) \sin^2 \frac{2 \pi}{a} \\
+ 4 \sin \frac{2 \pi}{a} \cos \frac{2 \pi}{a} \left( 1 - \cos^2 \alpha_1 - \cos^2 \beta_1 - \cos^2 \beta_2 - 2 \cos \alpha_1 \cos \beta_1 \cos \beta_2 \right) < 0,
\end{array} $$
because
$$1 - \cos^2 \alpha_1 - \cos^2 \beta_1 - \cos^2 \beta_2 - 2 \cos \alpha_1 \cos \beta_1 \cos \beta_2 < 0$$
and from $\cos \beta_1 > \frac{1}{2}$, $\sin \frac{2 \pi}{a} > \sin \frac{\pi}{a} \geq \sin \alpha_1$ also holds
$$\sin \alpha_1 \cos \alpha_1 < 2 \sin \frac{2 \pi}{a} \cos \alpha_1 \cos \beta_1, \ \ \
\sin \alpha_1 \cos \frac{2 \pi}{a} \cos \beta_1 < \sin \frac{2 \pi}{a} \cos \frac{2 \pi}{a}.$$

%%%%%%%%%%%%%%%%%%%%%%%%%%%%%%
%%%%%%%%%%%%%%%%%%%%%%%%%%%%%

%{\bf 2.} 
For $d_2$ similarly as before holds
$d_2 \left( \alpha_1, \beta_1, 0 \right) \geq 0$, and $d_2 \left( \alpha_1, \beta_1, \frac{2 \pi}{b} \right) < 0$.

%%%%%%%%%%%%%%%%%%%%%%%%%%%%%%%%%
%%%%%%%%%%%%%%%%%%%%%%%%%%%%%%%5

\vspace{5mm}

For derivatives $d_1$ and $d_2$ the next Lemma is valid.

\begin{lemma} \label{l:3}
When (\ref{AB}) applied (complete, for both $\alpha_2$, $\beta_2$), in domain $D$ holds:
\begin{enumerate}
	\item For a fixed $\bar{\beta_1}$, there is a unique value $\alpha_1 = \alpha_d$ such that $d_1 \left( \alpha_d, \bar{\beta_1} \right) = 0$.
	\item For a fixed $\bar{\alpha_1}$, there is a unique value $\beta_1 = \beta_d$ such that $d_2 \left( \bar{\alpha_1}, \beta_d \right) = 0$.
\end{enumerate}
\end{lemma}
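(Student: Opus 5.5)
The plan is an intermediate value argument followed by a strict monotonicity argument. We work in the three-parameter description of $d_1$ and $d_2$ introduced above. Under (\ref{AB}), with $\beta_1=\bar{\beta_1}$ fixed, $\alpha_2=\frac{2\pi}{a}-2\alpha_1$ is a decreasing affine function of $\alpha_1$. On the part of $D$ under consideration, $\alpha_2$ runs over an interval whose endpoints are exactly those at which the sign of $d_1$ was computed above:
\begin{itemize}
\item $\alpha_2$ runs from $0$ to $\frac{\pi}{3}$ when $a=2$, $\alpha_1\in[\frac{\pi}{3},\frac{\pi}{2}]$;
\item from $\frac{\pi}{6}$ up to $\frac{\pi}{2}$ when $a=3$, after shrinking to $\alpha_1\ge\frac{\pi}{12}$, where the extra endpoint is treated as below;
\item from $0$ to $\frac{2\pi}{a}$ when $a\ge 4$.
\end{itemize}
So the first step is to record $d_1(\alpha_1,0,\bar\beta_1)\ge 0$, with strict inequality for $\bar\beta_1>0$, together with the negative values at $\alpha_2=\frac{\pi}{3},\frac{\pi}{2},\frac{2\pi}{a}$ established above. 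Continuity then gives at least one zero $\alpha_d$. For $a=3$, the endpoint $\alpha_2=\frac{\pi}{2}$ corresponds to $\alpha_1=\frac{\pi}{12}$, and I would check that it lies on the boundary of $D$, so the sign change is still captured. The case $\bar\beta_1=0$ is degenerate: there $d_1$ reduces to terms multiplied by $\sin\alpha_2$, and the root is taken at the endpoint $\alpha_2=0$, or handled separately.

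The second step is uniqueness. I would show that $h(\alpha_1)=d_1(\alpha_1,\frac{2\pi}{a}-2\alpha_1,\bar\beta_1)$ is strictly decreasing in $\alpha_2$ (equivalently, increasing in $\alpha_1$) on the relevant range, by estimating the total derivative $\frac{\partial d_1}{\partial\alpha_1}-2\frac{\partial d_1}{\partial\alpha_2}$. The dominant contribution is
\[
-2\frac{\partial}{\partial\alpha_2}\Big[-2\sin\alpha_2\cos\alpha_2\,B_{11}\Big]=4\cos 2\alpha_2\,B_{11}\cdot(-1)\cdot\ldots,
\]
together with the $\sin^2\beta_1$ terms. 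The key point is that $B_{11}<0$ and $\sin^2\beta_1$ is small, because $\beta_1\le\frac{\pi}{b}\le\frac{\pi}{4}$. I would therefore group $d_1=\sin^2\beta_1\,P(\alpha_1,\alpha_2,\beta_1)+Q(\alpha_1,\alpha_2)$, where $Q=-2\sin\alpha_1(\cos\alpha_1+\cos\bar\beta_1\cos\beta_2)\sin^2\alpha_2+4\sin\alpha_2\cos\alpha_2B_{11}$.

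An alternative to proving monotonicity on the whole interval, which may be easier, is to show that at any zero of $h$ the derivative $h'$ has a fixed sign. Substituting the relation $d_1=0$ into $h'$ would eliminate the large $B_{11}$ term. This rules out a second zero, since consecutive simple zeros would need opposite signs of $h'$.

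The main obstacle is precisely this sign control of $h'$. It is especially delicate for $a=2,3$, where $\alpha_2$ reaches $\frac{\pi}{3}$ or $\frac{\pi}{2}$ and $\cos2\alpha_2$ changes sign. I would first try the ``sign of $h'$ at zeros'' approach, using the bounds $\cos\beta_1\ge\frac{\sqrt2}{2}$, $\cos\beta_2\ge\sin2\beta_1$ and $\sin^2\beta_1\le\frac12$ already used in this section. Part 2 is handled symmetrically, by interchanging the roles $\alpha\leftrightarrow\beta$ and $a\leftrightarrow b$. We use $d_2(\alpha_1,\beta_1,0)\ge0$ and $d_2(\alpha_1,\beta_1,\frac{2\pi}{b})<0$ for the existence of $\beta_d$, and the same monotonicity estimate in $\beta_2$ for its uniqueness. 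That estimate is easier here because $b\ge4$ keeps $\beta_2\le\frac{\pi}{2}$ throughout.
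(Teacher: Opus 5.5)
Your existence step is sound. It is the intermediate value argument that the boundary signs computed just before the lemma support. One slip: for $a=3$ the constrained segment has $\alpha_2\in[0,\frac{\pi}{2}]$, not $[\frac{\pi}{6},\frac{\pi}{2}]$, and its endpoint $\alpha_2=0$, i.e.\ $\alpha_1=\frac{\pi}{3}$, is exactly where you need $d_1\ge 0$.

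The gap is uniqueness, which is the real content of the lemma. Your primary plan for it cannot work, because $h(\alpha_1)=d_1(\alpha_1,\frac{2\pi}{a}-2\alpha_1,\bar{\beta_1})$ is in general \emph{not} monotone. The term $2\sin 2\alpha_2\,B_{11}$ of $d_1$ contributes $-8\cos 2\alpha_2\,B_{11}+2\sin 2\alpha_2\,\frac{\partial B_{11}}{\partial\alpha_1}$ to $h'$. At $\alpha_2=\frac{\pi}{2}$ this equals $8B_{11}$, which is about $-20$ in the examples below and dominates. So for $a=3$ and $a=4$, where $\alpha_2$ reaches $\frac{\pi}{2}$, the function $h$ starts by decreasing. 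For example:
\begin{itemize}
\item with $a=3$, $b=4$, $\bar{\beta_1}=0.4$ one finds $h(\frac{\pi}{12})\approx-1.31$, $h(\frac{\pi}{6})\approx-5.54$, $h(\frac{\pi}{4})\approx-3.73$, $h(\frac{\pi}{3})\approx0.37$;
\item with $a=4$, $b=5$, $\bar{\beta_1}=0.3$ one finds $h(0)\approx-0.33$, $h(0.1)\approx-3.07$.
\end{itemize}
Part 2 is not easier for the reason you give. The range $\beta_2\le\frac{\pi}{2}$ is precisely where $\cos 2\beta_2$ changes sign, and for $b=4$ the same dip appears. With $a=3$, $\bar{\alpha_1}=0.81$, $d_2$ along the constraint is about $-1.45$ at $\beta_1=0$ and about $-3.94$ at $\beta_1=0.1$.

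So only your fallback, that $h'>0$ at every zero of $h$, can yield uniqueness, and as written it is a plan, not an argument. You still need a concrete inequality. For instance, it would suffice to prove $h<0$ on the part of the segment where $\cos 2\alpha_2\le 0$ and $h'>0$ where $\cos 2\alpha_2\ge 0$, together with the analogue for $d_2$ in terms of $\beta_2$.

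The paper argues differently. It recasts $d_1=0$, $2\alpha_1+\alpha_2=\frac{2\pi}{a}$ as the fixed point problem for $\bar{g}^1$ and invokes Theorem 2. It asserts the contraction property from continuity and boundedness of $d_1$ for a suitable $k_1$, so existence and uniqueness come at once. That route does not supply the missing estimate either. The Jacobian $J$ of $\bar{g}^1$ satisfies $\det(I-J)=\bigl(\frac{\partial d_1}{\partial\alpha_1}-2\frac{\partial d_1}{\partial\alpha_2}\bigr)/(2k_1)=h'/(2k_1)$. At the interior points of $D_1$ where $h'<0$, $J$ therefore has a real eigenvalue larger than $1$, and $\bar{g}^1$ cannot be a contraction there for any $k_1>0$ in any norm. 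A complete proof must handle the initial decrease of $h$ explicitly, and neither your proposal nor the contraction shortcut does that.
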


\begin{proof} 
{\bf 1.} For the system $d_1 \left( \alpha_1, \alpha_2, \beta_1 \right) = 0$, $2 \alpha_1 + \alpha_2 = \frac{2 \pi}{a}$ we shall form the equivalent one, using fixed value of $\beta_1 = \bar{\beta_1}$ (and the corresponding fixed value of $\beta_2$), as well as a conveniently chosen constant $k_1 > 0$, which ensures that within the Definition \ref{d:1} is fulfilled $0 < L < 1$:
\begin{equation} \label{10}
\begin{array} {lcl}
   \alpha_1 & = & g_1^1 \left( \alpha_1, \alpha_2 \right) \\
   \alpha_2 & = & g_2^1 \left( \alpha_1, \alpha_2 \right)
\end{array}
\end{equation}
where 
$$ \begin{array} {ccl} %\begin{eqnarray*}
   g_1^1 \left( \alpha_1, \alpha_2 \right) & = & \frac{\pi}{a} - \frac{\alpha_2}{2} \\
   g_2^1 \left( \alpha_1, \alpha_2 \right) & = & \frac{3 \alpha_2}{4} - \frac{\pi}{2a} - \frac{\alpha_1}{2} + \frac{d_1 \left( \alpha_1, \alpha_2, \bar{\beta_1} \right)}{k_1}.
%\end{eqnarray*}
\end{array} $$
How $d_1$ is a continuous and bounded function on the domain
%%%%%%%%%%%%%%%
\begin{equation*}  D_1 = \left\{
\begin{array}{l}
\left\{ \left( \alpha_1, \alpha_2 \right): \alpha_1 \in \left[ \frac{\pi}{3}, \frac{\pi}{2} \right], \alpha_2 \in \left[ 0, \frac{\pi}{3} \right]  \right\}, a = 2, b \geq 4, \\
\left\{ \left( \alpha_1, \alpha_2 \right): \alpha_1 \in \left[ \frac{\pi}{12}, \frac{\pi}{3} \right], \alpha_2 \in \left[ 0, \frac{\pi}{2} \right]  \right\}, b > a = 3,  \\
\left\{ \left( \alpha_1, \alpha_2 \right): \alpha_1 \in \left[ 0, \frac{\pi}{a} \right], \alpha_2 \in \left[ 0, \frac{2 \pi}{a} \right]  \right\}, b > a \geq 4,
\end{array}
\right.
\end{equation*} 
%%%%%%%%%%%%
the function $\bar{g}^1 = \left( g_1^1, g_2^1 \right)$ is mapping with contraction. Therefore the system (\ref{10}) has a unique solution $\alpha_1 = \alpha_d$, $\alpha_2 = \frac{2 \pi}{a} - 2 \alpha_d$ which is also giving a solution $\alpha_1 = \alpha_d$ of the equation $d_1 \left( \alpha_1, \bar{\beta_1} \right) = 0$.

\vspace{5mm}

{\bf 2.} Similarly, we shall start with a system $d_2 \left( \alpha_1, \beta_1, \beta_2 \right) = 0$, $2 \beta_1 + \beta_2 = \frac{2 \pi}{b}$, and using fixed value of $\alpha_1 = \bar{\alpha_1}$ and a conveniently chosen constant $k_2 > 0$ form equivalent system
\begin{equation*} 
\begin{array} {lclcl}
   \beta_1 & = & g_1^2 \left( \beta_1, \beta_2 \right) & = & \frac{\pi}{b} - \frac{\beta_2}{2}\\
   \beta_2 & = & g_2^2 \left( \beta_1, \beta_2 \right) & = & \frac{3 \beta_2}{4} - \frac{\pi}{2b} - \frac{\beta_1}{2} + \frac{d_2 \left( \bar{\alpha_1}, \beta_1, \beta_2 \right)}{k_1}
\end{array}
\end{equation*}
which similarly have a unique solution on the domain 
$$D_2 = \left\{ \left( \beta_1, \beta_2 \right): \beta_1 \in \left[ 0, \frac{\pi}{b} \right], \beta_2 \in \left[ 0, \frac{2 \pi}{b} \right]  \right\},$$ 
leading to a unique solution $\beta_1 = \beta_d$ of  $d_2 \left( \bar{\alpha_1}, \beta_1 \right) = 0$.
\end{proof}

%%%%%%%%%%%%%%%%%%%%%%5555
%%%%%%%%%%%%%%%%

\subsection{The proof of the Theorem \ref{th:1}}

Using properties of $d_1$ and $d_2$ given in the previous subsection it is easy to proof the following 

\begin{lemma} \label{l:4}
On the domain $D$ it is fulfilled
\begin{enumerate}
  \item For the fixed $\beta_1 = \bar{\beta_1}$ the equation $f_1 \left( \alpha_1, \bar{\beta_1} \right) = 0$ has a unique solution $\alpha_1 = \alpha_f$ such that $0 < \alpha_f < \alpha_d < \frac{\pi}{a}$ for $\bar{\beta_1} \neq 0$ and $\alpha_f = \alpha_d = \frac{\pi}{a}$ for $\bar{\beta_1} = 0$.
  \item If $f_2 \left( \bar{\alpha_1}, 0 \right) \geq 0$ is true for the fixed $\alpha_1 = \bar{\alpha_1}$ then the equation $f_2 \left( \bar{\alpha_1}, \beta_1 \right) = 0$ has a unique solution $\beta_1 = \beta_f$ such that $0 \leq \beta_f < \beta_d < \frac{\pi}{b}$ for $\bar{\alpha_1} \neq 0$ and $\beta_f = \beta_d = \frac{\pi}{b}$ for $\bar{\alpha_1} = 0$. If $f_2 \left( \bar{\alpha_1}, 0 \right) < 0$ is satisfied for the fixed $\alpha_1 = \bar{\alpha_1}$ then the equation $f_2 \left( \bar{\alpha_1}, \beta_1 \right) = 0$ has no solutions.  
\end{enumerate}
\end{lemma}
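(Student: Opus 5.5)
Both parts rest on the same one-variable principle. Take a function $h$ on an interval whose derivative has exactly one zero; then $h$ is unimodal, so its sign pattern is fixed by its values at the endpoints and at the critical point. Concretely, I view $f_1(\cdot,\bar{\beta_1})$ as a function of $\alpha_1$ alone, with $\alpha_2=\frac{2\pi}{a}-2\alpha_1$ substituted by (\ref{AB}). Its derivative along this constraint is exactly $d_1$, because $\frac{d}{d\alpha_1}=\frac{\partial}{\partial\alpha_1}-2\frac{\partial}{\partial\alpha_2}$. Similarly, $f_2(\bar{\alpha_1},\cdot)$ is a function of $\beta_1$ with derivative $d_2$.

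\textbf{Part 1.} Fix $\bar{\beta_1}\neq 0$. The boundary values of $d_1$ computed above are $d_1(\alpha_1,0,\beta_1)\ge 0$ at $\alpha_2=0$, i.e. at $\alpha_1=\frac{\pi}{a}$, and $d_1<0$ at the other end of $D$ ($\alpha_2=\frac{\pi}{3},\frac{\pi}{2},\frac{2\pi}{a}$ according to the case). By Lemma \ref{l:3}, $d_1$ has exactly one zero $\alpha_d$. Hence, as $\alpha_1$ increases, $f_1$ is strictly decreasing up to $\alpha_d$ and strictly increasing on $[\alpha_d,\frac{\pi}{a}]$.

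Next I use the boundary values of $f_1$. By (\ref{5}) and (\ref{6}), $f_1(\frac{\pi}{a},\bar{\beta_1})=-(\cos\frac{\pi}{a}+\cos\bar{\beta_1})^2\sin^2\bar{\beta_1}<0$ for $a\ge3$. For $a=2$ the same expression gives $f_1(\frac{\pi}{2},\bar{\beta_1})<0$. At the left end of $D$, $f_1>0$: this is $f_1(0,\cdot)>0$ for $a\ge4$, $f_1(\frac{\pi}{3},\cdot)>0$ for $a=2$, and for $a=3$ it follows from the estimate in the proof of Lemma \ref{l:2}, extended by continuity to $\alpha_1=\frac{\pi}{12}$.

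On the increasing branch $[\alpha_d,\frac{\pi}{a}]$ we have $f_1\le f_1(\frac{\pi}{a})<0$, so there is no root there. On the decreasing branch $f_1$ goes from a positive value to a negative one, so it has exactly one root $\alpha_f<\alpha_d$. The root is strictly inside since $\alpha_d<\frac{\pi}{a}$; otherwise $f_1$ would be decreasing on the whole interval, and the derivative at $\frac{\pi}{a}$, namely $d_1=2\sin\alpha_1(\cos\alpha_1+\cos\beta_1)\sin^2\beta_1$, is strictly positive when $\bar{\beta_1}\neq0$, which is incompatible with that.

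For $\bar{\beta_1}=0$ both $f_1$ and $d_1$ vanish at $\alpha_1=\frac{\pi}{a}$. Writing $f_1(\alpha_1,0)=(\cos\alpha_1+\cos\frac{2\pi}{b})^2\sin^2\alpha_2$ shows it is positive for $\alpha_2\neq0$, so the unique zero is $\alpha_f=\alpha_d=\frac{\pi}{a}$.

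\textbf{Part 2.} The argument is the same with the roles of the variables exchanged, using (\ref{7}) and the stated signs $d_2(\cdot,\cdot,0)\ge0$ (i.e. at $\beta_1=\frac{\pi}{b}$) and $d_2(\cdot,\cdot,\frac{2\pi}{b})<0$ (i.e. at $\beta_1=0$). By Lemma \ref{l:3}, $f_2(\bar{\alpha_1},\cdot)$ decreases on $[0,\beta_d]$ and increases on $[\beta_d,\frac{\pi}{b}]$, and $f_2(\bar{\alpha_1},\frac{\pi}{b})\le0$, strictly when $\bar{\alpha_1}\neq0$. So the increasing branch contains no root, and the only root, if any, lies on the decreasing branch. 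That root exists exactly when the left endpoint value satisfies $f_2(\bar{\alpha_1},0)\ge0$, and it is then unique with $0\le\beta_f<\beta_d<\frac{\pi}{b}$. If $f_2(\bar{\alpha_1},0)<0$, then $f_2<0$ on the whole interval and there is no solution. The degenerate case $\bar{\alpha_1}=0$ is handled by the factorization $f_2(0,\beta_1)=(\cos\frac{2\pi}{a}+\cos\beta_1)^2\sin^2\beta_2$, exactly as in Part 1.

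\textbf{Main obstacle.} The delicate step is pinning down the endpoint signs uniformly across the three cases of $D$. This includes the strict inequalities needed to get $\alpha_d<\frac{\pi}{a}$ and $\beta_d<\frac{\pi}{b}$, and the left endpoint for $a=3$, where no closed-form value of $f_1$ was given. There I would rely on the positivity argument of Lemma \ref{l:2}.2, whose estimates still hold at $\alpha_1=\frac{\pi}{12}$.
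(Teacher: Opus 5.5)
Your proposal is correct and follows the paper's own argument. Lemma~\ref{l:3}, together with the boundary signs of $d_1$ and $d_2$, makes $f_1(\cdot,\bar{\beta_1})$ and $f_2(\bar{\alpha_1},\cdot)$ first decreasing and then increasing, and the endpoint values in (\ref{5})--(\ref{7}) then force exactly one root, or none when $f_2(\bar{\alpha_1},0)<0$. You also supply details the paper leaves implicit, all of which check out: $\alpha_d<\frac{\pi}{a}$ strictly because $d_1>0$ at $\alpha_2=0$ when $\bar{\beta_1}\neq 0$, and $f_1>0$ at the left endpoint $\alpha_1=\frac{\pi}{12}$ for $a=3$, where the estimate from Lemma~\ref{l:2} still applies since $\cos\alpha_2=0$ there.
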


\begin{proof} 
{\bf 1.} By Lemma \ref{l:3} for the fixed $\bar{\beta_1} \neq 0$ the function $f_1$ decreases for $\alpha_1 \in \left( 0, \alpha_d \right)$ and increases for $\alpha_1 \in \left( \alpha_d, \frac{\pi}{a} \right)$. For $\bar{\beta_1} = 0$ the function $f_1$ decreases. It follows that $f_1 \left( \alpha_1, \bar{\beta_1} \right) = 0$ has a unique solution.

\vspace{3mm} 

{\bf 2.} The first part of the statement is proved similarly to the first part of the Lemma. If $f_2 \left( \bar{\alpha_1}, 0 \right) < 0$ holds, it is easy to see that $f_2 \left( \bar{\alpha_1}, \beta_1 \right) < 0$ for each $\beta_1 \in \left[ 0, \frac{\pi}{b} \right]$.

\end{proof} 

%%%%%%%%%%%%%%%%%%%%%%%%%%%%%   !!!!!!!!!

In the following Lemmas \ref{l:5} - \ref{l:8}, four cases are given which complete the proof of Theorem \ref{th:1}. We assume that (\ref{AB}) holds in all of these Lemmas.

%%%%%%%%%%%%%%%%%%%%%%%%%%

\begin{lemma} \label{l:5}
If $f_2 \left( \frac{\pi}{a}, 0 \right) < 0$ then the system (\ref{edge}) has no solutions.
\end{lemma}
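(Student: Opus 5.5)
The plan is to show that, when $f_2\left(\frac{\pi}{a},0\right)<0$, the function $f_2$ is negative on the whole domain $D$. Then the second equation of (\ref{edge}) cannot hold anywhere, and the system has no solutions. By the second part of Lemma \ref{l:4}, for each fixed $\bar{\alpha_1}$ with $f_2\left(\bar{\alpha_1},0\right)<0$, the equation $f_2\left(\bar{\alpha_1},\beta_1\right)=0$ has no solution. It therefore suffices to prove that
\[
f_2\left(\alpha_1,0\right)<0 \quad \text{for all admissible } \alpha_1,
\]
given that this holds at $\alpha_1=\frac{\pi}{a}$. In other words, the whole statement reduces to a one-variable monotonicity claim along the edge $\beta_1=0$ of $D$.

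\textbf{Step 1: an explicit formula on the edge $\beta_1=0$.} Put $\beta_1=0$, so $\beta_2=\frac{2\pi}{b}$, and use $\alpha_2=\frac{2\pi}{a}-2\alpha_1$. Since $\sin\beta_1=0$, the first bracket $B_{11}$ of $f_2$ simplifies to $-\left(\cos\alpha_1+\cos\frac{2\pi}{b}\right)^2$. This gives
\[
f_2(\alpha_1,0)=-\left(\cos\alpha_1+\cos\tfrac{2\pi}{b}\right)^2\sin^2\alpha_1+\left(\cos\alpha_1+\cos\alpha_2\right)^2\sin^2\tfrac{2\pi}{b},
\]
which is the same computation as for $f_1(\alpha_1,0)$ in (\ref{6}). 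Since $\cos\alpha_1+\cos\alpha_2=2\cos\frac{\pi}{a}\cos\left(\frac{\pi}{a}-\alpha_1\right)$, this factors as a difference of squares:
\[
f_2(\alpha_1,0)=\Bigl(2\cos\tfrac{\pi}{a}\cos\bigl(\tfrac{\pi}{a}-\alpha_1\bigr)\sin\tfrac{2\pi}{b}-\bigl(\cos\alpha_1+\cos\tfrac{2\pi}{b}\bigr)\sin\alpha_1\Bigr)\cdot\Bigl(\text{positive factor}\Bigr).
\]
The second factor is the corresponding sum, which is positive because all the cosines involved are positive on $D$. So the sign of $f_2(\alpha_1,0)$ is the sign of
\[
h(\alpha_1)=2\cos\tfrac{\pi}{a}\cos\bigl(\tfrac{\pi}{a}-\alpha_1\bigr)\sin\tfrac{2\pi}{b}-\bigl(\cos\alpha_1+\cos\tfrac{2\pi}{b}\bigr)\sin\alpha_1 .
\]
At $\alpha_1=\frac{\pi}{a}$, after dividing by $\cos\frac{\pi}{a}$ and using $2\cos^2\frac{\pi}{a}=1+\cos\frac{2\pi}{a}$, this should reproduce the sign in (\ref{main}), consistent with the remark after Theorem \ref{th:1}.

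\textbf{Step 2: monotonicity of $h$ (the main obstacle).} I would show that $h$ is decreasing, or at least $h(\alpha_1)\le h\left(\frac{\pi}{a}\right)$, on the $\alpha_1$-range of $D$. That range is $\left[\frac{\pi}{3},\frac{\pi}{2}\right]$ for $a=2$, $\left[\frac{\pi}{12},\frac{\pi}{3}\right]$ for $a=3$, and $\left[0,\frac{\pi}{a}\right]$ for $a\ge 4$. Differentiating gives
\[
h'(\alpha_1)=2\cos\tfrac{\pi}{a}\sin\bigl(\tfrac{\pi}{a}-\alpha_1\bigr)\sin\tfrac{2\pi}{b}-\cos 2\alpha_1-\cos\tfrac{2\pi}{b}\cos\alpha_1 .
\]
The first term is at most of size $\sin\frac{2\pi}{b}\sin\frac{\pi}{a}$, which is small because $b>a$. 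For $a\ge 4$, with $\alpha_1\le\frac{\pi}{4}$, the remaining terms are clearly negative and dominate. For $a=2$ the interval is shifted; there I would instead bound the first term by $2\cos\frac{\pi}{2}\cdot(\dots)=0$, which makes the argument immediate. For $a=3$ a direct estimate with $b\ge 4$ should suffice.

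If the derivative estimate turns out to be awkward near the end of an interval, I would fall back on the structure already established. In Lemma \ref{l:4}, each fixed $\bar{\alpha_1}$ determines a solution $\beta_f$, and one can compare these values as $\alpha_1$ increases. The paper's Lemma \ref{l:2} already excludes the parts of the range outside $D$.

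\textbf{Step 3: conclusion.} With $f_2(\alpha_1,0)<0$ established for all $\alpha_1$ in $D$, Lemma \ref{l:4}(2) gives $f_2(\alpha_1,\beta_1)<0$ on the whole of $D$. Hence $f_2=0$ has no solution in $D$. Together with Lemma \ref{l:2}, which excludes the region outside $D$, the system (\ref{edge}) has no solution at all.
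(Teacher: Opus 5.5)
\textbf{The reduction fails for $a\ge 3$.} Your reduction to the single claim ``$f_2(\alpha_1,0)<0$ for all admissible $\alpha_1$'' is false for $a\ge 3$, so no estimate in Step 2 can rescue it.

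By (\ref{7}), or by your own formula, $f_2(0,0)=\left(1+\cos\frac{2\pi}{a}\right)^2\sin^2\frac{2\pi}{b}>0$ and $f_2\left(0,\frac{\pi}{b}\right)=0$. For $a\ge 4$ the segment $\alpha_1=0$ lies in $D$. So under the hypothesis $f_2\left(\frac{\pi}{a},0\right)<0$ (e.g.\ $a=4$, $b=10$), $f_2(\alpha_1,0)$ changes sign on $\left[0,\frac{\pi}{a}\right]$. By Lemma \ref{l:4}, $f_2$ then vanishes along a whole curve in $D$. The same happens for $a=3$ inside $\left[\frac{\pi}{12},\frac{\pi}{3}\right]$: for $b=9$ one finds $f_2\left(\frac{\pi}{12},0\right)\approx 0.18>0$ while $f_2\left(\frac{\pi}{3},0\right)\approx -0.27<0$.

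Step 2 also has the inequality backwards. If $h$ decreases on an interval ending at $\frac{\pi}{a}$, then $h(\alpha_1)\ge h\left(\frac{\pi}{a}\right)$, not $\le$. Your own heuristic $h'<0$ for $a\ge 4$ is exactly what permits $h(0)>0$.

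So for $a\ge 3$, the absence of solutions is a property of the pair $(f_1,f_2)$, and an argument that never uses $f_1$ cannot work. The missing idea is the paper's separation by the diagonal $d:\ \beta_1=\frac{\pi-a\alpha_1}{b}$ of $D^+$. One shows that on $d$ one has $f_1>0$ except at $\left(\frac{\pi}{a},0\right)$, and $f_2<0$ except at $\left(0,\frac{\pi}{b}\right)$. Combine this with the unique-zero structure of Lemma \ref{l:4} and with $f_1(0,\beta_1)>0$, $f_1\left(\frac{\pi}{a},\beta_1\right)\le 0$, $f_2(0,\beta_1)\ge 0$. Then the zero curve of $f_1$ lies on one side of $d$ and that of $f_2$ on the other, so they cannot meet.

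\textbf{Error in Step 1 and the case $a=2$.} Your identity $\cos\alpha_1+\cos\alpha_2=2\cos\frac{\pi}{a}\cos\left(\frac{\pi}{a}-\alpha_1\right)$ would need $\alpha_1+\alpha_2=\frac{2\pi}{a}$, but (\ref{AB}) gives $2\alpha_1+\alpha_2=\frac{2\pi}{a}$. The correct factorization is $2\cos\left(\frac{\pi}{a}-\frac{\alpha_1}{2}\right)\cos\left(\frac{3\alpha_1}{2}-\frac{\pi}{a}\right)$. At $\alpha_1=\frac{\pi}{a}$ this equals $1+\cos\frac{\pi}{a}$, not $2\cos\frac{\pi}{a}$.

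With your version, $h\left(\frac{\pi}{a}\right)$ does not reproduce (\ref{main}). For $a=2$, your \emph{first term vanishes} shortcut would give $f_2(\alpha_1,0)<0$ for every $b$. That contradicts $f_2\left(\frac{\pi}{2},0\right)=-\cos\frac{4\pi}{b}>0$ for $b\le 7$.

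For $a=2$ alone, your strategy is the paper's: prove $f_2(\alpha_1,0)<0$ on $\left[\frac{\pi}{3},\frac{\pi}{2}\right]$, then apply Lemma \ref{l:4}. However, it genuinely needs $b\ge 9$. The paper uses $\cos\frac{2\pi}{b}>\frac{\sqrt{2}}{2}$ and $\sin^2\frac{2\pi}{b}<\frac12$, substitutes $x=\cos\alpha_1$ and $\cos\alpha_2=1-2x^2$, and checks that the resulting bound $F(x)$ decreases on $\left[0,\frac12\right]$ with $F(0)=0$.

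Finally, for $a=2$, Lemma \ref{l:2} excludes only \emph{proper} solutions with $\alpha_1<\frac{\pi}{3}$. Your concluding ``no solution at all'' is too strong there.
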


%%%%%%%%%%%%%%%%%%
\begin{proof} 
When $a = 2$, $b \geq 4$, then $f_2 \left( \frac{\pi}{a}, 0 \right) < 0$ is true only if $b \geq 9$. Then
\begin{eqnarray*}
f_2 \left( \alpha_1, 0 \right) & = & - \left( \cos \alpha_1 + \cos \beta_2 \right)^2 \sin^2 \alpha_1 + \left( \cos \alpha_1 + \cos \alpha_2 \right)^2 \sin^2 \beta_2 < \\
  	& < & \left( \cos \alpha_1 + \frac{\sqrt{2}}{2} \right)^2 \sin^2 \alpha_1 + \left( \cos \alpha_1 + \cos \alpha_2 \right)^2 \frac{1}{2} = F \left( \cos \alpha_1 \right),
\end{eqnarray*}
with $\alpha_1 \in \left[ \frac{\pi}{3}, \frac{\pi}{2} \right]$. After replacement $x = \cos \alpha_1$, $x \in \left[ 0, \frac{1}{2} \right]$ ,
$$F(x) = - \left( x + \frac{\sqrt{2}}{2} \right)^2 \left( 1 - x^2 \right) + \frac{1}{2} \left( x - 2 x^2 + 1 \right)^2. $$

On the interval $\left[ 0, \frac{1}{2} \right]$, $F(x)$ decreases because $F' = 12 x^3 + 3\left( \sqrt{2} - 2 \right) x^2 -4x + 1 - \sqrt{2}$; $F'\left( - \frac{1}{4} \right) > 0$, $F'\left( 0 \right) < 0$, $F'\left( \frac{1}{2} \right) < 0$, $F'\left( 1 \right) > 0$. As $F\left( 0 \right) = 0$, it follows $f_2 \left( \alpha_1, 0 \right) < 0$ for each $\alpha_1 \in \left[ \frac{\pi}{3}, \frac{\pi}{2} \right]$. Therefore, by Lemma \ref{l:4} the system (\ref{edge}) has no solutions.

%%%%%%%%%%
\vspace{5mm}

If $b > a \geq 3$, by Lemma \ref{l:4} follows $f_2 \left( \frac{\pi}{a}, \beta_1 \right) < 0$ for each $\beta_1 \in \left[ 0, \frac{\pi}{b} \right]$. Also, by (\ref{6}), (\ref{7}) it is $f_1 \left( 0, \beta_1 \right) > 0$, $f_1 \left( \frac{\pi}{a}, \beta_1 \right) \leq 0$, $f_2 \left( 0, \beta_1 \right) \geq 0$. 

We shall proof that on the diagonal $d: \beta_1 = - \frac{a \alpha_1}{b} + \frac{\pi}{b}$ (or $\alpha_1 = - \frac{b \beta_1}{a} + \frac{\pi}{a}$) of the domain 
$$D^+ = \left\{ \left( \alpha_1, \beta_1 \right): \alpha_1 \in \left[ 0, \frac{\pi}{a} \right] , \beta_1 \in \left[ 0, \frac{\pi}{b} \right] \right\} $$
is true $f_1 \left( \alpha_1, \beta_1 \right) > 0$ for $\left( \alpha_1, \beta_1 \right) \neq \left( \frac{\pi}{a}, 0 \right)$ and $f_1 \left( \frac{\pi}{a}, 0 \right) = 0$. Also $f_2 \left( \alpha_1, \beta_1 \right) < 0$ for $\left( \alpha_1, \beta_1 \right) \neq \left( 0, \frac{\pi}{b} \right)$ and $f_2 \left( 0, \frac{\pi}{b} \right) = 0$. That would mean, by Lemma \ref{l:4} that system (\ref{edge}) has no solutions. More precisely, the guaranteed unique value $\alpha_f$ for a given $\beta_1$ satisfying $f_1 \left( \alpha_f, \beta_1 \right) = 0$ is greater then $- \frac{b \beta_1}{a} + \frac{\pi}{a}$ (i.e. above the diagonal, if we assume that $\alpha_1$ and $\beta_1$ are placed on the $x$ and $y$ axes, respectively). Similarly, if for some $\alpha_1$ is fulfilled $f_2 \left( \alpha_1, 0 \right) \geq 0$, then the guaranteed unique value $\beta_f$ giving $f_2 \left( \alpha_1, \beta_f \right) = 0$ is less than $- \frac{a \alpha_1}{b} + \frac{\pi}{b}$ (i.e. it is below the diagonal). So, $f_1$ and $f_2$ cannot be zero at the same time.

Let us consider the systems $\beta_1 = - \frac{a \alpha_1}{b} + \frac{\pi}{b}$, $f_1 \left( \alpha_1, \beta_1 \right) = 0$ and $\beta_1 = - \frac{a \alpha_1}{b} + \frac{\pi}{b}$, $f_2 \left( \alpha_1, \beta_1 \right) = 0$. We shall transform them to equivalent systems $\alpha_1 = g_1^1$, $\beta_1 = g_2^1$ and $\alpha_1 = g_1^2$, $\beta_1 = g_2^2$ where
\begin{equation*} 
\begin{array} {lcl}
g_1^1 = (1 - t) \alpha_1 + t (- \frac{b \beta_1}{a} + \frac{\pi}{a}) + \frac{f_1}{k_1} & \ \ \ &
g_1^2 = (1 - t) \alpha_1 + t (- \frac{b \beta_1}{a} + \frac{\pi}{a}) + \frac{f_2}{k_2} \\
g_2^1 = - \frac{a \alpha_1}{b} + \frac{\pi}{b} & \ \ \ &
g_2^2 = - \frac{a \alpha_1}{b} + \frac{\pi}{b}
\end{array} 
\end{equation*}
If we take for example $t = \frac{a}{2b}$, and choose constants $k_1$, $k_2$, so that the partial derivatives are less then 1, and to be satisfied $g_1^1, g_2^1 \in \left[ 0, \frac{\pi}{a} \right]$, then both functions $\bar{g}_1 = (g_1^1, g_2^1)$, $\bar{g}_2 = (g_1^2, g_2^2)$ are mapping with contraction on domain $D^+$. That means both of starting systems have unique solution $f_1 \left( \frac{\pi}{a}, 0 \right) = 0$ and $f_2 \left( 0, \frac{\pi}{b} \right) = 0$ respectively.
\end{proof} 

%%%%%%%%%%%%%%%%%

\begin{lemma} \label{l:6}
If $f_2 \left( \frac{\pi}{a}, 0 \right) = 0$ then the system (\ref{edge}) has no proper solutions.
\end{lemma}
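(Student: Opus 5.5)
We reduce this to the same diagonal comparison used in the proof of Lemma \ref{l:5}, run at the boundary case. The corner $(\alpha_1,\beta_1)=(\frac{\pi}{a},0)$ will turn out to be the only common zero of $f_1$ and $f_2$, and it is degenerate.

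First we check that this corner is a common zero. By (\ref{6}) we have $f_1\left(\frac{\pi}{a},\beta_1\right)=-\left(\cos\frac{\pi}{a}+\cos\beta_1\right)^2\sin^2\beta_1$, so $f_1\left(\frac{\pi}{a},0\right)=0$. By hypothesis $f_2\left(\frac{\pi}{a},0\right)=0$ as well, so $(\frac{\pi}{a},0)$ solves (\ref{edge}). At this point (\ref{AB}) gives $\alpha_2=0$ and $\beta_1=0$. A dihedral angle equal to $0$ cannot occur in a genuine simplex, since the corresponding faces would coincide or be parallel. Hence this solution is not proper.

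Second, we show there is no other solution in $D$. For $b>a\ge 3$ we repeat the argument of Lemma \ref{l:5} on the diagonal $d:\ \beta_1=-\frac{a\alpha_1}{b}+\frac{\pi}{b}$ of $D^+$.
\begin{itemize}
\item We show $f_1>0$ on $d$ except at $(\frac{\pi}{a},0)$, and $f_2<0$ on $d$ except at $(0,\frac{\pi}{b})$. This uses the same contraction construction with $t=\frac{a}{2b}$ and suitable constants $k_1,k_2$, together with Theorems \ref{th:2} and \ref{th:3}. That construction does not depend on the sign of $f_2\left(\frac{\pi}{a},0\right)$.
\item By Lemma \ref{l:4}, for each $\beta_1\ne 0$ the unique zero $\alpha_f$ of $f_1(\cdot,\beta_1)$ lies strictly above the diagonal. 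For each $\alpha_1\ne\frac{\pi}{a}$ with $f_2(\alpha_1,0)\ge 0$, the unique zero $\beta_f$ lies strictly below it.
\item Therefore the zero curves of $f_1$ and $f_2$ can meet only on $d$. The only point of $d$ where both can vanish is $(\frac{\pi}{a},0)$, because $f_2\left(0,\frac{\pi}{b}\right)=0$ while $f_1\left(0,\frac{\pi}{b}\right)>0$ by (\ref{6}).
\end{itemize}
For $a=2$, equality $f_2\left(\frac{\pi}{2},0\right)=0$ would have to be checked against $b\in\{4,\dots,8\}$. By the table it fails for every integer $b$: $b_{max}=7$ for $a=2$ means strict inequality (\ref{main}) holds for $b\le7$ and fails strictly for $b\ge9$, leaving only $b=8$ to confirm numerically. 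The same applies to any integer pair. Still, we handle it uniformly by the $F(x)$ estimate of Lemma \ref{l:5}. That estimate gives $F(x)\le F(0)=0$ with equality only at $x=0$, i.e. $\alpha_1=\frac{\pi}{2}$. So $f_2(\alpha_1,0)<0$ for $\alpha_1<\frac{\pi}{2}$, and by Lemma \ref{l:4} the only solution is again the corner. Lemma \ref{l:2} excludes the remaining part of the domain.

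The main obstacle is the tangency at the corner. At $(\frac{\pi}{a},0)$ both zero curves pass through the same endpoint of the diagonal, so the strict separation from Lemma \ref{l:5} degenerates there. We must show the curves do not touch elsewhere near the corner.

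To do this, we would expand $f_1$ and $f_2$ to first order at $(\frac{\pi}{a},0)$, using $d_1$, $d_2$ and the signs from Lemma \ref{l:3}, and compare slopes. The zero curve of $f_1$ leaves the corner above the diagonal and that of $f_2$ below it, provided $\partial f_2/\partial\alpha_1$ at the corner is nonzero. This sign is what we would verify explicitly first.
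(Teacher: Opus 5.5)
There is a genuine gap in the case $b>a\ge 3$. In your first bullet you assert that $f_2<0$ on the diagonal $d$ except at $(0,\frac{\pi}{b})$. You also assert that the contraction construction of Lemma \ref{l:5} ``does not depend on the sign of $f_2(\frac{\pi}{a},0)$''. Under the present hypothesis this is false. The point $(\frac{\pi}{a},0)$ is the other endpoint of $d$ and $f_2(\frac{\pi}{a},0)=0$, so $f_2$ vanishes at \emph{both} endpoints of $d$; your second bullet, which excludes $\alpha_1=\frac{\pi}{a}$, implicitly concedes this. The system $\beta_1=-\frac{a\alpha_1}{b}+\frac{\pi}{b}$, $f_2=0$ then has two solutions in $D^+$. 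Hence $\bar g_2$ cannot be a contraction there, and the claim that every $\beta_f$ lies strictly below $d$ is left unproved. Only the $f_1$ half of the diagonal argument, which does not involve $f_2$, can be reused.

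This is exactly what the paper's proof repairs. Since $f_1(0,\beta_1)>0$ by (\ref{6}), continuity gives a strip $\alpha_1\in[0,\lambda]$ on which $f_1>0$, so no common zero lies there. The paper then replaces $d$ by the line $d_\lambda$ through $(\lambda,\frac{\pi}{b})$ and $(\frac{\pi}{a},0)$. On $d_\lambda$ one has $f_2(\lambda,\frac{\pi}{b})<0$ and $f_2$ vanishes only at the corner. The contraction argument on $D_\lambda$ then yields $f_1>0$ and $f_2<0$ on $d_\lambda$ away from the corner, and the above/below argument via Lemma \ref{l:4} goes through.

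Your last paragraph senses the degeneracy but leaves it unresolved, and the proposed tool would not suffice. Because $f_1=B_{00}\sin^2\beta_1-B_{11}\sin^2\alpha_2$ and both $\beta_1$ and $\alpha_2$ vanish at $(\frac{\pi}{a},0)$, the gradient of $f_1$ is zero there. So a first-order expansion says nothing about how the zero set of $f_1$ leaves the corner; the quadratic terms decide it. For $f_2$, the relevant condition is that its zero curve leaves the corner with slope (of $\beta_1$ against $\frac{\pi}{a}-\alpha_1$) smaller than $\frac{a}{b}$, not merely that $\partial f_2/\partial\alpha_1\neq0$. In any case, a local computation at the corner does not give the sign of $f_2$ along the whole open diagonal, which is what your argument needs.

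Finally, your aside that the hypothesis fails for every integer pair is unproved, and for $a=2$ it is wrong. There, equality in (\ref{main}) becomes $\sin\frac{2\pi}{b}=\cos\frac{2\pi}{b}$, which holds exactly at $b=8$. Your $F(x)$ argument nevertheless handles that case correctly, since for $b=8$ one has $f_2(\alpha_1,0)=F(\cos\alpha_1)$ exactly, in agreement with the paper.
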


%%%%%%%%%%%%%%%%%%
\begin{proof} 
In this case, $\left( \alpha_1, \beta_1 \right) = \left( \frac{\pi}{a}, 0 \right)$ is solution of the system (\ref{edge}), but it is not proper, because the dihedral angles of simplex $T$ cannot be equal to zero. Let us proof that this solution is unique.

When $a = 2$, the assumption of Lemma is fulfilled only for $b = 8$.
Then similarly as in the poof of previous Lemma it can be shown that for $\alpha_1 \in \left[ \frac{\pi}{3}, \frac{\pi}{2} \right]$ and $\left( \alpha_1, \beta_1 \right) \neq \left( \frac{\pi}{2}, 0 \right)$ holds $f_2 \left( \alpha_1, \beta_1 \right) < 0$.

For $b > a \geq 3$ situation is also similar as in the poof of previous Lemma, but on the diagonal $d$ the function $f_2$ has two zero values, $f_2 \left( 0, \frac{\pi}{b} \right) = 0$ and $f_2 \left( \frac{\pi}{a}, 0 \right) = 0$. Therefore we shall notice that, because of continuity of the function $f_1$, there exist an arbitrarily small number $\lambda > 0$ such that for $\alpha_1 \in \left[ 0, \lambda \right]$, $f_1 \left( \alpha_1, \beta_1 \right) > 0$ is true.

Then, we can use instead of $d$ the straight line $d_{\lambda}$ passing through the points $\left( \lambda, \frac{\pi}{b} \right)$ and $\left( \frac{\pi}{a}, 0 \right)$, that is $d_{\lambda}: \beta_1 = \frac{ \left( \alpha_1 - \frac{\pi}{a} \right) a \pi }{b \left( \lambda a - \pi \right)}$. Afterwards, similarly as before we shall prove that on $d_{\lambda}$ is true $f_1 \left( \alpha_1, \beta_1 \right) > 0$ and $f_2 \left( \alpha_1, \beta_1 \right) < 0$ for $\left( \alpha_1, \beta_1 \right) \neq \left( \frac{\pi}{a}, 0 \right)$.

Then on the domain 
$$D_{\lambda} = \left\{ \left( \alpha_1, \beta_1 \right): \alpha_1 \in \left[ \lambda , \frac{\pi}{a} \right] , \beta_1 \in \left[ 0, \frac{\pi}{b} \right] \right\}$$
the functions $\bar{g}_1 = (g_1^1, g_2^1)$, $\bar{g}_2 = (g_1^2, g_2^2)$ where
%%%
$$ \begin{array}{ccl}
g_1^1 & = & (1 - t) \alpha_1 + t \left( \frac{\pi}{a} + \frac{b \left( \lambda a - \pi \right) \beta_1 }{a \pi} \right) + \frac{f_1}{k_1} \\
g_1^2 & = & (1 - t) \alpha_1 + t \left( \frac{\pi}{a} + \frac{b \left( \lambda a - \pi \right) \beta_1 }{a \pi} \right) + \frac{f_2}{k_2} \\
g_2^1 & = & g_2^2  \ = \ \frac{ \left( \alpha_1 - \frac{\pi}{a} \right) a \pi }{b \left( \lambda a - \pi \right)}
%, \ \alpha_1 \geq \lambda; \ \ \ g_2^2 = \frac{\pi}{b}, \alpha_1 < \lambda 
\end{array} $$
%%%
are mapping with contraction. As $\lambda$ is arbitrarily small number, we can take $t = \frac{a}{2b}$ and assume that $\left| \frac{a \pi}{b \left( \lambda a - \pi \right)} \right| < 1$. 
Therefore, similarly as in the poof of previous Lemma, $\left( \alpha_1, \beta_1 \right) = \left( \frac{\pi}{a}, 0 \right)$ is the unique solution of the system (\ref{edge}).
\end{proof}

%%%%%%%%%%%%%%%%%

\begin{lemma} \label{l:7}
If $f_2 \left( \alpha_1, 0 \right) > 0$ for each $\alpha_1 \in \left[ 0, \frac{\pi}{a} \right]$ then the system (\ref{edge}) has unique solution.
\end{lemma}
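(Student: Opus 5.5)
The natural plan is to reduce the system (\ref{edge}) to intersecting two curves in the domain $D$, using Lemma \ref{l:4}. By part 1 of Lemma \ref{l:4}, for every $\beta_1 \in \left[0, \frac{\pi}{b}\right]$ there is a unique $\alpha_f(\beta_1)$ with $f_1(\alpha_f(\beta_1), \beta_1) = 0$. This gives a curve $c_1$ running from $\left(\frac{\pi}{a}, 0\right)$ (where $\alpha_f = \frac{\pi}{a}$) to some point with $\beta_1 = \frac{\pi}{b}$. By part 2, the hypothesis $f_2(\alpha_1, 0) > 0$ for all $\alpha_1$ ensures that for every $\alpha_1$ there is a unique $\beta_f(\alpha_1)$ with $f_2(\alpha_1, \beta_f(\alpha_1)) = 0$. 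Moreover $0 < \beta_f < \frac{\pi}{b}$ for $\alpha_1 \neq 0$ and $\beta_f(0) = \frac{\pi}{b}$, so $c_2$ is a graph over $\alpha_1$ joining $\left(0, \frac{\pi}{b}\right)$ to a point $\left(\frac{\pi}{a}, \beta_f\left(\frac{\pi}{a}\right)\right)$ with $\beta_f\left(\frac{\pi}{a}\right) > 0$. Continuity of $\alpha_f$ and $\beta_f$ follows from the implicit function theorem, since by Lemma \ref{l:3} the derivatives $d_1$, $d_2$ do not vanish at the zeros of $f_1$, $f_2$ (the zero lies strictly before $\alpha_d$, resp. $\beta_d$).

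\textbf{Existence.} Consider $h(\beta_1) = f_2(\alpha_f(\beta_1), \beta_1)$ on $\left[0, \frac{\pi}{b}\right]$. At $\beta_1 = 0$ we have $h(0) = f_2\left(\frac{\pi}{a}, 0\right) > 0$ by hypothesis. At $\beta_1 = \frac{\pi}{b}$, (\ref{7}) gives $h\left(\frac{\pi}{b}\right) = -\left(\cos\alpha_f + \cos\frac{\pi}{b}\right)^2 \sin^2\alpha_f < 0$, because $\alpha_f \neq 0$ there. The intermediate value theorem then gives a zero, which is a solution of (\ref{edge}) with $\beta_1 \in \left(0, \frac{\pi}{b}\right)$ and $\alpha_1 = \alpha_f \in \left(0, \frac{\pi}{a}\right)$, i.e. a proper one.

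\textbf{Uniqueness.} Here I would proceed as in Lemmas \ref{l:5} and \ref{l:6}. First recast (\ref{edge}) as a fixed point problem
$$\alpha_1 = \alpha_1 + \frac{f_1(\alpha_1,\beta_1)}{k_1}, \qquad \beta_1 = \beta_1 + \frac{f_2(\alpha_1,\beta_1)}{k_2},$$
or a damped variant with weights $(1-t)$, $t$ as there. Then choose $k_1, k_2 > 0$ (negative signs as needed, given $d_1 < 0$ near $c_1$ and $d_2 < 0$ near $c_2$) so that, on a closed subrectangle $D^* \subset D$ containing the intersection, the map sends $D^*$ into itself and the matrix $G$ of Theorem \ref{th:3} satisfies $\|G\| < 1$. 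Theorem \ref{th:2} then yields a unique fixed point in $D^*$. Outside $D^*$ one excludes zeros using the sign information (\ref{5})--(\ref{7}) and the monotonicity from Lemma \ref{l:4}, for instance that $f_1 > 0$ to the left of $c_1$ and $f_2 > 0$ below $c_2$. For $a = 2$ the domain is $\alpha_1 \in \left[\frac{\pi}{3}, \frac{\pi}{2}\right]$, and Lemma \ref{l:2} removes the rest. The case $(a,b) = (2,3)$ is handled by the computer check cited.

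The main obstacle is the contraction estimate, namely bounding the mixed partials $\partial f_1/\partial \beta_1$ and $\partial f_2/\partial \alpha_1$ relative to the diagonal ones uniformly in $a$ and $b$. A cleaner alternative that I would try first is to show that the curves cross transversally in a fixed sense. Concretely, along $c_1$ the function $h$ is strictly decreasing, which is equivalent to the slope of $c_1$ being greater in absolute value than that of $c_2$ at any intersection. This reduces to the Jacobian inequality
$$\frac{\partial f_1}{\partial \alpha_1}\frac{\partial f_2}{\partial \beta_1} - \frac{\partial f_1}{\partial \beta_1}\frac{\partial f_2}{\partial \alpha_1} > 0$$
at common zeros. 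There one can use the relation $\sin\alpha_2 \sin\beta_2 = \sin\alpha_1 \sin\beta_1$ obtained in Lemma \ref{l:2} to simplify, with all derivatives computed under (\ref{AB}). Checking this inequality by crude trigonometric bounds on $D$ is where most of the work will be.
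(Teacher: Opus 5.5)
Your existence argument is correct, and it takes a different route from the paper. You apply the intermediate value theorem to $h(\beta_1)=f_2(\alpha_f(\beta_1),\beta_1)$, using $h(0)=f_2(\frac{\pi}{a},0)>0$ and $h(\frac{\pi}{b})<0$ from (\ref{7}), and this yields a proper common zero directly. The paper instead builds a curvilinear region $D^-$ and gets existence and uniqueness together from a contraction. Your argument only uses $f_2(\frac{\pi}{a},0)>0$, so it also gives existence under the hypotheses of Lemma \ref{l:8}.

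One small repair is needed. The implicit function theorem does not give continuity of $\alpha_f$ at $\beta_1=0$, because there $\alpha_f=\alpha_d=\frac{\pi}{a}$ and $d_1(\frac{\pi}{a},0)=0$. Use instead that $f_1(\cdot,\beta_1)$ has a unique zero on a compact interval: any limit point of $\alpha_f(\beta_1^{(n)})$ with $\beta_1^{(n)}\to 0$ is a zero of $f_1(\cdot,0)$, hence equals $\frac{\pi}{a}$.

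The genuine gap is uniqueness, which is the actual content of the lemma. You describe two routes but carry out neither.

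\textbf{Fixed-point route.} Rescaling by $k_1,k_2$ cannot create a contraction. To get $\|G\|<1$ in Theorem \ref{th:3} for $g_1=\alpha_1+f_1/k_1$, $g_2=\beta_1+f_2/k_2$ (max norm), you need $d_1,d_2<0$ on the region and the strict diagonal dominance $\sup|\partial f_1/\partial\beta_1|<\inf|d_1|$ and $\sup|\partial f_2/\partial\alpha_1|<\inf|d_2|$. You also need $g$ to map the region into itself. None of this is checked.

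The paper's proof takes exactly this route on $D^-$, which is bounded by $\alpha_1=\lambda$, $\beta_1=\mu$ and curves $l_1$, $l_2$ squeezed between $\alpha_f$ and $\alpha_d$ (resp. $\beta_f$ and $\beta_d$). This choice makes $d_1,d_2<0$ inside $D^-$ and makes $f_1$, $f_2$ change sign across opposite sides. But the paper also only asserts that suitable $k_1,k_2$ exist, so you cannot import the missing estimate from it.

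\textbf{Jacobian route.} This is the sharper target. Along $c_1$ one has $h'=J/d_1$ with $d_1<0$. So $J>0$ at common zeros gives $h'<0$ at every zero of $h$. Since $h$ is $C^1$ on $(0,\frac{\pi}{b}]$ and $h(0)>0>h(\frac{\pi}{b})$, this forces exactly one zero. The paper's uniform dominance condition implies $J>0$, so your criterion is weaker and would also cover Lemma \ref{l:8} at once.

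However, until $J>0$ at common zeros (or the dominance estimate) is actually proved on $D$ for all admissible $b>a\ge 2$, your proposal establishes existence only, not the uniqueness the lemma asserts.
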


%%%%%%%%%%%%%%%%%%
\begin{proof} 
By (\ref{6}) for $b > a \geq 3$ holds $f_1 \left( 0, \beta_1 \right) > 0$ and by the assumption of this Lemma holds $f_2 \left( \alpha_1, 0 \right) > 0$. As $f_1$, $f_2$ are continuous functions, there exist such values $\lambda > 0$, $\mu > 0$ that for $\alpha_1 \in \left[ 0, \lambda \right]$ holds $f_1 \left( \alpha_1, \beta_1 \right) > 0$ and for $\beta_1 \in \left[ 0, \mu \right]$ holds $f_2 \left( \alpha_1, \beta_1 \right) > 0$. We can use $\lambda = \frac{\pi}{12}$ for $b > a = 3$ and $\lambda = \frac{\pi}{3}$ for $a = 2$, $b \geq 4$ in accordance with the Lemma \ref{l:2}.

Due to continuity of the functions $f_1$, $f_2$, $d_1$, $d_2$, by the Lemmas \ref{l:3}, \ref{l:4} there exists continuous functions $\alpha_1 = l_1 \left( \beta_1 \right)$ and $\beta_1 = l_2 \left( \alpha_1 \right)$ such that (using notations from the previous Lemmas) for fixed $\bar{\beta_1} \geq \mu$ holds $\alpha_f < l_1 \left( \bar{\beta_1} \right) < \alpha_d$ and for fixed $\bar{\alpha_1} \geq \lambda$ holds $\beta_f < l_2 \left( \bar{\alpha_1} \right) < \beta_d$. So, for the points on the line $\alpha_1 = l_1 \left( \beta_1 \right)$ is true $f_1 \left( \alpha_1, \beta_1 \right) < 0$ and $d_1 \left( \alpha_1, \beta_1 \right) < 0$, while for the points on the line $\beta_1 = l_2 \left( \alpha_1 \right)$ is true $f_2 \left( \alpha_1, \beta_1 \right) < 0$ and $d_2 \left( \alpha_1, \beta_1 \right) < 0$. Therefore, on the closed domain $D^-$ bounded by $\alpha_1 = \lambda$, $\alpha_1 = l_1 \left( \beta_1 \right)$, $\beta_1 = \mu$, $\beta_1 = l_2 \left( \alpha_1 \right)$ function $\bar{g} = (g_1, g_2)$ where $g_1 = \alpha_1 + \frac{f_1}{k_1}$, $g_2 = \beta_1 + \frac{f_2}{k_2}$ ($k_1$, $k_2$ are conveniently chosen constants) is mapping with contraction. Then the system $\alpha_1 = g_1 \left( \alpha_1, \beta_1 \right)$, $\beta_1 = g_2 \left( \alpha_1, \beta_1 \right)$ which is equivalent to the system (\ref{edge}) has the unique solution on $D^-$. Note, that boundaries of $D^-$ are so chosen that outside of it either there are no solutions of the system or they are no proper.
\end{proof}

%%%%%%%%%%%%%%%%%

\begin{lemma} \label{l:8}
If $f_2 \left( \frac{\pi}{a}, 0 \right) > 0$ and there exists $\hat{\alpha_1} \in \left[ 0, \frac{\pi}{a} \right)$ such that $f_2 \left( \hat{\alpha_1}, 0 \right) \leq 0$
then the system (\ref{edge}) has unique solution.
\end{lemma}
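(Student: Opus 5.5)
The plan is to reduce Lemma \ref{l:8} to the situation of Lemma \ref{l:7} by cutting the domain. The part of the $\alpha_1$-range near $0$ is the only place where $f_2(\alpha_1,0)\le 0$ can occur, and I expect no solution there.

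First I locate the set where $f_2(\cdot,0)$ is nonpositive. By (\ref{7}), $f_2(0,\beta_1)\ge 0$, while by hypothesis $f_2(\frac{\pi}{a},0)>0$ and $f_2(\hat{\alpha_1},0)\le 0$. Define
$$\alpha^* = \sup\{\alpha_1\in[0,\tfrac{\pi}{a}) : f_2(\alpha_1,0)\le 0\}.$$
By continuity $f_2(\alpha^*,0)=0$ and $f_2(\alpha_1,0)>0$ for $\alpha_1\in(\alpha^*,\frac{\pi}{a}]$. By Lemma \ref{l:4}(2), for every $\bar{\alpha_1}$ with $f_2(\bar{\alpha_1},0)<0$ there is no $\beta_1$ with $f_2=0$. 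At the remaining points with $f_2(\bar{\alpha_1},0)=0$ the only zero is $\beta_f=0$, which is not proper. Hence every proper solution of (\ref{edge}) has $\alpha_1>\alpha^*$. I would also check that no proper solution has $\alpha_1\le\alpha^*$ at points where $f_2(\cdot,0)$ is positive. The cleanest way is to show that $\{\alpha_1: f_2(\alpha_1,0)\le 0\}$ is an interval $[\alpha',\alpha^*]$. On $[0,\alpha')$ we then have $f_1>0$, either by (\ref{6}) together with the choice of $\lambda$ from Lemma \ref{l:7}, or by Lemma \ref{l:2} when $a=2,3$. To get this, I would argue that in the $(\alpha_1,\beta_1)$-rectangle the curve $\beta_1=\beta_f(\alpha_1)$ lies below the diagonal $d$, exactly as in the proof of Lemma \ref{l:5}, near $\alpha_1=0$.

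Next I repeat the construction of Lemma \ref{l:7} on the reduced domain, with $\lambda$ replaced by $\lambda'=\alpha^*+\varepsilon$ for a small $\varepsilon>0$. On $[\lambda',\frac{\pi}{a}]$ we have $f_2(\alpha_1,0)>0$, so by compactness there is $\mu>0$ with $f_2(\alpha_1,\beta_1)>0$ for $\beta_1\in[0,\mu]$. Lemma \ref{l:4} again gives continuous separating curves $\alpha_1=l_1(\beta_1)$ and $\beta_1=l_2(\alpha_1)$, lying between the zero curves and the derivative curves of Lemma \ref{l:3}. On the region $D^-$ bounded by $\alpha_1=\lambda'$, $\alpha_1=l_1(\beta_1)$, $\beta_1=\mu$ and $\beta_1=l_2(\alpha_1)$, the map $g_1=\alpha_1+f_1/k_1$, $g_2=\beta_1+f_2/k_2$ is a contraction for suitable $k_1,k_2$. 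Theorem \ref{th:2} and Theorem \ref{th:3} then give exactly one solution in $D^-$.

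Finally I must show that the strip $\alpha_1\in(\alpha^*,\lambda']$ contains no second solution. Since $\varepsilon$ is arbitrary, any solution there would have to accumulate at $\alpha_1=\alpha^*$. This forces $\beta_1\to 0$, because $f_2(\alpha^*,\beta_1)<0$ for $\beta_1>0$ by the monotonicity in Lemma \ref{l:4}. But $f_1(\alpha^*,0)\ge 0$ by (\ref{6}), with equality only at $\alpha_1=\frac{\pi}{a}$, which contradicts $f_1=0$. So for $\varepsilon$ small enough the strip is solution-free, and the solution is unique.

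The main obstacle is the first step, where the nonpositivity set of $f_2(\cdot,0)$ must be controlled, showing it is an interval on which $f_1$ keeps a fixed sign. For that I would try the diagonal comparison of Lemma \ref{l:5}, restricted to $[0,\alpha^*]$.
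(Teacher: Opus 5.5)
Your architecture matches the paper's: a solution-free strip near $\beta_1=0$, the contraction of Lemma \ref{l:7} on a curvilinear region, and exclusion of the part with small $\alpha_1$. Your limit argument for the strip $(\alpha^*,\lambda']$ is sound, and it is a reasonable substitute for the paper's order of choices ($\mu$ first, then $\lambda$ with $f_2(\lambda,\mu)=0$). The genuine gap is the step you flag yourself: nothing in the proposal shows that there is no proper solution with $\alpha_1\le\alpha^*$. Lemma \ref{l:4}(2) only disposes of those $\alpha_1$ with $f_2(\alpha_1,0)\le 0$. Wherever $f_2(\alpha_1,0)>0$ to the left of $\alpha^*$, there is a zero $\beta_f(\alpha_1)$ of $f_2$, and you must keep it off the zero curve of $f_1$. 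Your suggested fixes do not achieve this. The interval property of $\{f_2(\cdot,0)\le 0\}$ is unproved, and even granted it leaves $[0,\alpha')$ open. On that interval, $f_1>0$ does not follow from (\ref{6}) plus continuity, which only yields some small unspecified $\lambda$. Nor does it follow from Lemma \ref{l:2}: that lemma covers only $[0,\frac{\pi}{12}]$ for $a=3$ and nothing for $a\ge 4$. For $a=2$ it only excludes proper solutions with $\alpha_1\le\frac{\pi}{3}$, and in fact $f_1(0,\beta_1)\le 0$ by (\ref{5}).

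The missing idea is the paper's two-sided line comparison on $D^*=[0,\lambda]\times[\mu,\frac{\pi}{b}]$. First, $f_2<0$ on the part of the diagonal $d$ with small $\alpha_1$, except at $(0,\frac{\pi}{b})$. This must be proved afresh (the paper uses the contraction $\bar{g}_2$ on $D^0$). It cannot be copied from Lemma \ref{l:5}, because under the present hypothesis $f_2(\frac{\pi}{a},0)>0$, so it fails at the other end of $d$. By Lemma \ref{l:4}, all zeros of $f_2$ there then lie strictly below $d$. Second, $f_1>0$ on the whole of $d$ for $b>a\ge 3$. For $a=2$, $d$ is useless for $f_1$, so the paper instead uses the line $\tilde d$ through $(\frac{\pi}{3},\frac{\pi}{b})$ and $(\frac{\pi}{2},0)$, with $f_1(\frac{\pi}{3},\beta_1)>0$ from (\ref{5}). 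Hence $f_1>0$ on and below that line, and since $\tilde d$ lies above $d$, the two sign regions cover the left part. The $f_1$ half is needed at heights $\beta_1$ whose point on the line lies to the right of $\alpha^*$, so a comparison restricted to $[0,\alpha^*]$ does not suffice.

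Separately, the contraction on $D^-$ needs $f_1>0$ on its left edge $\alpha_1=\lambda'$; otherwise $g_1=\alpha_1+f_1/k_1$ can leave $D^-$ and Theorem \ref{th:2} does not apply. In Lemma \ref{l:7} this came from the choice of $\lambda$, but it is not automatic here; the paper secures it by taking $l_2(\lambda)$ below $d$. In your setting it follows from your own limit argument. Since $f_1(\alpha^*,0)>0$ and $\beta_f(\lambda')\to 0$ as $\varepsilon\to 0$, you can take $l_2(\lambda')$ just above $\beta_f(\lambda')$.
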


%%%%%%%%%%%%%%%%%%
\begin{proof} 
For parameters $a = 2$, $b \geq 4$ and $b > a = 3$, due to the established in Lemma \ref{l:2}, it is of interest to investigate only the cases of $\hat{\alpha_1} \in \left[ \frac{\pi}{3}, \frac{\pi}{2} \right)$ and $\hat{\alpha_1} \in \left[ \frac{\pi}{12}, \frac{\pi}{3} \right)$, respectively. Since $f_1 \left( \alpha_1, 0 \right) > 0$ 
for $\alpha_1 \neq \frac{\pi}{a}$ %(and $\alpha_1 \neq 0$ for $a = 2$, $b \geq 4$),
and $f_2 \left( \frac{\pi}{a}, 0 \right) > 0$, the system (\ref{edge}) has no solution for $\beta_1 = 0$. As the functions $f_1$ and $f_2$ are are continuous, there exists such value $\mu > 0$ such that the system (\ref{edge}) has no solution for $\beta_1 \in \left[ 0, \mu \right]$. We shall additionally require that $\mu$ satisfies $f_2 \left( \frac{\pi}{a}, \mu \right) > 0$. Let $\lambda \in \left( 0, \frac{\pi}{a} \right)$ be such that for $\alpha_1 \in \left( \lambda, \frac{\pi}{a} \right]$ is fulfilled $f_2 \left( \alpha_1, \mu \right) > 0$ and $f_2 \left( \lambda, \mu \right) = 0$. 

Similarly as in the proof of the previous Lemma, let the lines $\alpha_1 = l_1 \left( \beta_1 \right)$ and $\beta_1 = l_2 \left( \alpha_1 \right)$ be such that for the fixed $\bar{\beta_1} \geq \mu$ holds $\alpha_f < l_1 \left( \bar{\beta_1} \right) < \alpha_d$ and for fixed $\bar{\alpha_1} \geq \lambda$ holds $\beta_f < l_2 \left( \bar{\alpha_1} \right) < \beta_d$. Because of $f_2 \left( \lambda, \mu \right) = 0$ (that means $\beta_f = \mu$ for $\bar{\alpha_1} = \lambda$), it is possible to chose the line $\beta_1 = l_2 \left( \alpha_1 \right)$ such that $l_2 \left( \lambda \right) \leq \frac{- \lambda a}{b} + \frac{\pi}{b}$. Then, using previously proven in Lemma \ref{l:5} (on the diagonal $d$ holds $f_1 > 0$, for $\alpha_1 \neq \frac{\pi}{a}$) here we get $f_1 \left( \lambda, \beta_1 \right) > 0$, for $\beta_1 \in \left[ \mu, l_2 \left( \lambda \right) \right]$. Also, for points on the line $\alpha_1 = l_1 \left( \beta_1 \right)$ is true 
$f_1 \left( \alpha_1, \beta_1 \right) < 0$, $d_1 \left( \alpha_1, \beta_1 \right) < 0$, and for the points on the line $\beta_1 = l_2 \left( \alpha_1 \right)$ is true $f_2 \left( \alpha_1, \beta_1 \right) < 0$ and $d_2 \left( \alpha_1, \beta_1 \right) < 0$.

Therefore, on the closed domain $\bar{D}$ bounded by $\alpha_1 = \lambda$, $\alpha_1 = l_1 \left( \beta_1 \right)$, $\beta_1 = \mu$, $\beta_1 = l_2 \left( \alpha_1 \right)$ function $\bar{g} = (g_1, g_2)$ where $g_1 = \alpha_1 + \frac{f_1}{k_1}$, $g_2 = \beta_1 + \frac{f_2}{k_2}$ ($k_1$, $k_2$ are conveniently chosen constants) is mapping with contraction. Then the system $\alpha_1 = g_1 \left( \alpha_1, \beta_1 \right)$, $\beta_1 = g_2 \left( \alpha_1, \beta_1 \right)$ which is equivalent to the system (\ref{edge}) has the unique solution on $\bar{D}$. 

It only remains to prove that system (\ref{edge}) has no solutions on the domain
$$D^* = \left\{ \left( \alpha_1, \beta_1 \right): \alpha_1 \in \left[ 0, \lambda \right] , \beta_1 \in \left[ \mu, \frac{\pi}{b} \right] \right\} .$$

We shall first proof that on the part of diagonal $d$ ($d: \beta_1 = - \frac{a \alpha_1}{b} + \frac{\pi}{b}$, as in the proof of Lemma \ref{l:5}) belonging to $D^*$ holds $f_2 \left( \alpha_1, \beta_1 \right) < 0$ for $\left( \alpha_1, \beta_1 \right) \neq \left( 0, \frac{\pi}{b} \right)$ and $f_2 \left( 0, \frac{\pi}{b} \right) = 0$. Actually, we can restrict domain $D^*$ using $\beta_{\lambda} = - \frac{a \lambda}{b} + \frac{\pi}{b}$ instead of $\mu$. Then the function $\bar{g}_2 = (g_1^2, g_2^2)$ (introduced as in the proof of Lemma \ref{l:5}) on the domain $D^0 = \left\{ \left( \alpha_1, \beta_1 \right): \alpha_1 \in \left[ 0, \lambda \right] , \beta_1 \in \left[ \beta_{\lambda}, \frac{\pi}{b} \right] \right\}$ is mapping with contraction and therefore our assumption is correct.

If $b > a \geq 3$ as in the proof of Lemma \ref{l:5} on the part of the $d$ belonging to $D^*$ holds $f_1 \left( \alpha_1, \beta_1 \right) > 0$. That means the system (\ref{edge}) has no solutions on $D^*$.

In the case $a = 2$, $b \geq 4$ we have to consider only $\alpha_1 \geq \frac{\pi}{3} \geq \lambda$. Then the sign of $f_1$ will be considered on the straight line $\tilde{d}: \beta_1 = \frac{-6 \alpha_1}{b} + \frac{3 \pi}{b}$ which passes through the points $\left( \frac{\pi}{3}, \frac{\pi}{b} \right)$ and $\left( \frac{\pi}{2}, 0 \right)$ (and which is above the diagonal $d$). Then, on the domain $\tilde{D} = \left\{ \left( \alpha_1, \beta_1 \right): \alpha_1 \in \left[ \frac{\pi}{3}, \frac{\pi}{2} \right] , \beta_1 \in \left[ 0, \frac{\pi}{b} \right] \right\}$ the function $\bar{g}^0 = (g_1^0, g_2^0)$ where 
%%%%%%%
\begin{eqnarray*} 
g_1^0 & = & (1 - t_1) \alpha_1 + t_1 (- \frac{b \beta_1}{6} + \frac{\pi}{2}) + \frac{f_1}{k_1}       \\
g_2^0 & = & (1 - t_2) \beta_1 + t_2 ( \frac{-6 \alpha_1}{b} + \frac{3 \pi}{b} ) 
\end{eqnarray*} 
is mapping with contraction. That means the system $\alpha_1 = g_1^0$, $\beta_1 = g_2^0$, equivalent to the system $f_1 = 0$, $\beta_1 = \frac{-6 \alpha_1}{b} + \frac{3 \pi}{b}$ has unique solution $\left( \frac{\pi}{2}, 0 \right)$. So, on the line $\tilde{d}$ within domain $D^*$, holds $f_1 \left( \alpha_1, \beta_1 \right) > 0$. Again the system (\ref{edge}) has no solutions on $D^*$.
\end{proof}

%%%%%%%%%%%%%%%%%

As previous Lemmas \ref{l:5} - \ref{l:8} are showing that edge conditions are necessary, we shall note here that they are also sufficient. That is because they give us the possibility to calculate the values of the dihedral angles $\alpha_1$, $\alpha_2$, $\beta_1$, $\beta_2$, for a given values of the parameters $a$ and $b$, e.g. by the iteration method for nonlinear systems. The following Table gives the values of $\alpha_1$, $\beta_1$ (in radians) for the initial values of $a$, $b$. A more detailed table is provided in \cite{S95}, as well as a graphical representation for individual cases.

\vspace{5mm} 

\centerline{TABLE}

\vspace{3mm} 

\begin{tabular}{|c|c|l|l|c|c|c|l|l|} 
\cline{1-4} \cline{6-9}
$a$ & $b$ & $\alpha_1$ & $\beta_1$ &  & $a$ & $b$ & $\alpha_1$ & $\beta_1$ \\
\cline{1-4} \cline{6-9}
2 & 3 & 1.20394  & 0.5969756 & \hspace{4mm} & 4 & 9 & 0.775143 & 1.856247E-02 \\
2 & 4 & 1.332343 & 0.3618578 &  						& 5 & 6 & 0.4764179 & 0.293696 \\
2 & 5 & 1.422264 & 0.218217 &  							& 5 & 7 & 0.5210571 & 0.2031649 \\
2 & 6 & 1.486543 & 0.1215925 &  						& 5 & 8 & 0.5562545 & 0.1346927 \\
2 & 7 & 1.534341 & 5.222918E-02 &  					& 5 & 9 & 0.5845534 & 8.110417E-02 \\
3 & 4 & 0.810013 & 0.4270995 &  						& 5 & 10 & 0.6078031 & 3.799561E-02 \\
3 & 5 & 0.8954629 & 0.2600982 &  						& 5 & 11 & 0.6283055 & 2.171273E-03 \\
3 & 6 & 0.9588664 & 0.1473277 &  						& 6 & 7 & 0.3923899 & 0.2568823 \\
3 & 7 & 1.006904  & 6.615758E-02 &  				& 6 & 8 & 0.4266642 & 0.1871642 \\
3 & 8 & 1.047133 & 4.563809E-03 &  					& 6 & 9 & 0.454274  & 0.1326011 \\
4 & 5 & 0.6026575 & 0.3451357 &  						& 6 & 10 & 0.4769213 & 8.873422E-02 \\
4 & 6 & 0.6630102 & 0.2239677 &  						& 6 & 11 & 0.4958152 & 5.269175E-02 \\
4 & 7 & 0.7093756 & 0.1364688 &  						& 6 & 12 & 0.5119572 & 2.248575E-02 \\
\cline{6-9}
4 & 8 & 0.7457473 & 7.034869E-02 &  			 	\multicolumn{5}{c}{ } \\
\cline{1-4} 
\end{tabular}

%%%%%%%%%%%%%%%%%%%%%%%%%%%%55

\section{Summary}

On the base of the symmetries of the considered fundamental simplices, there are considered the 'edge conditions'. These conditions arise in situations when the dihedral angles around the edges from the same equivalence class are different. They restrict previously discovered possible cases of simplices that are realized in spaces of constant curvature. On the other hand this method gives us possibility to discover cases of fundamental simplices which are realized in other spaces of non-constant curvature. 

If we think about such cases in non maximal families, besides considered here from family F12 and previously considered in \cite{S93} from F7, F11, the candidates for such kind of examination are also simplices from families F8 and F10. In the maximal families there are also cases of simplices which are investigated here or in \cite{S93}, as well as the cases which have not yet been considered.

%%%%%%%%%%%%%%%%%%%%%%%%%%%%%%%%%%%
%%%%%%%%%%%%%%%%%%%%%%%

\end{document}